\newcommand{\NN}{\mathbb{N}}
\newcommand{\RR}{\mathbb{R}}
\newcommand{\QQ}{\mathbb{Q}}
\newcommand{\ZZ}{\mathbb{Z}}
\renewcommand{\a}{\alpha}
\newcommand{\e}{\varepsilon}
\renewcommand{\d}{\delta}
\newcommand{\G}{\Gamma}
\renewcommand{\O}{\Omega}
\renewcommand{\o}{\omega}
\newtheorem{theorem}{Theorem}
\newtheorem{proof}{Proof}
\newtheorem{corollary}[theorem]{Corollary}
\newtheorem{definition}[theorem]{Definition}
\newtheorem{lemma}[theorem]{Lemma}
\newtheorem{proposition}[theorem]{Proposition}
\newtheorem{remark}[theorem]{Remark}
\title{Generalized Bernoulli Differential Equation}
 \author[1]{Hector Carmenate}
\author[2]{Paul Bosch}
\author[3]{Juan E. Nápoles}
\author[4]{José M. Sigarreta}
  \affil[1]{Mathematics and Natural Sciences Department, Padron Campus, Miami Dade College, 627 SW 27th Ave, Miami, 33135, Fl, United States; E-mail: hcarmena@mdc.edu}
  \affil[2]{Facultad de Ingeniería, Universidad del Desarrollo, Ave. La Plaza 680, San Carlos de Apoquindo, Las Condes, 7550000, Santiago, Chile; E-mail: pbosch@udd.cl}
  \affil[3]{Facultad de Ciencias Exactas y Naturales
y Agrimensura, Universidad Nacional del Nordeste, Corrientes 3400, Argentina; E-mail: jnapoles@exa.unne.edu.ar}
  \affil[4]{Facultad de Matemáticas, Universidad 
Autónoma de Guerrero, Carlos E. Adame No.54 Col. 
Garita, Acapulco Gro. 39650, Mexico; E-mail: 14366@uagro.mx}
\date{January 2023}
\begin{document}

\maketitle

\begin{abstract}
{In this paper we propose and solve a generalization of the Bernoulli Differential Equation, by means of a generalized fractional derivative. First we prove a generalization of Gronwall's inequality, which is useful for studying the stability of systems of fractional differential equations and we state results about the qualitative behavior of the trivial solution of the proposed equation. After that, we prove and state the main results about the solution of the generalized Bernoulli Differential Equation and also we give some examples that show the advantage of considering this
fractional derivative approach. We also present a finite difference method as an alternative to the solution of the generalized Bernoulli equation and prove its validity by means of examples.}
\end{abstract}
keywords: generalized local derivative, boundedness of solutions, Gronwall inequality\\

\section{Introduction}
Fractional calculus is a branch of mathematics that studies the differentiation and integration operators of generalized orders, fractional calculus had its birth almost at the same time as traditional calculus. But its application has already been widely demonstrated, for some applications refer to the following works  \cite{Khalil3, Baleanu04, Baleanu06}. Fractional Calculus has had different difficulties in the study of systems of fractional differential equations, although some approaches can be found in \cite{FMNS,G2017,PABB}.

The global fractional derivatives collect information on an interval and keep track of the history of the process, can be said to possess a certain memory; which makes it possible to model non-local and distributed responses that commonly appear in natural and physical phenomena, although it is known that they have certain limitations.  \cite{Khalil} defines a conformable fractional derivative that eliminates these disadvantages; more recently, a non-conformable local fractional derivative is introduced  in \cite{GLLMN}. We can consider fractional local derivatives to be a new tool, which has been proven its usefulness in multiple applications for several autors, see for example \cite {Cortez, FGNRS, GLLMN, Khalil3}.

\smallskip

This paper relies on use of new differential operators, depending on a general kernel function $T(t,\alpha)$, which include at once several local derivatives earlier introduced and studied in many different sources. This new tool is very powerful because we can model a phenomenon from two points of view: Considering different kernels and varying the order involved in said kernel.
\smallskip

It's know that one of the most paradigmatic nonlinear equations is the Bernoulli differential equation (which is know was introduced by Jacob in his work \cite{JB}), this equation can be considered as a statement of the principle of conservation of energy in fluids. In this paper we'll study Bernoulli's equation under the generalized fractional local derivative operator, but first we prove a generalized Gronwell's inequality and we obtain stability conditions for systems of differential fractional  equations under the approach of the generalized local fractional derivative, after that we find solvability and stability conditions for the generalized Bernoulli's equation proposed and finally, we give some examples showing particular cases of this Bernoulli's equation seen from different approaches.

There are cases in which the fractional differential equation becomes too complicated to solve by classical methods and other alternatives must be analyzed, numerical methods are a very useful tool when we must solve this type of equations. The finite difference methods replace the derivative operator by an appropriate quotient in differences that allows to approximate the solution quite efficiently. Here we use a finite difference method to solve the generalized Bernoulli equation and show the results obtained by means of examples.

\section{Generalized Derivation and Integration Operators}
In this section we present a  definition of a generalized local fractional derivative introduced  in \cite{FNRS} as well as a fractional integration operator introduced in \cite{FGNRS} and some of its most important properties that will be useful in the next section.\\

We will now present some advantages of studying the Bernoulli equation using the generalized differentiation operator.

The derivative we are considering generalizes many of the properties of the local derivatives existing so far, it also allows the computation of higher order derivatives and is not limited only to functions defined on the positive half-line. It is important to emphasize that the choice of the kernel $T(t,\alpha )$ leads to different practical applications. Thanks to the generality of the theoretical results obtained, we can state that they do not depend on the choice of the kernel. In the same direction, different applications of the generalized fractional derivatives are shown, as well as their relations with other types of local  derivatives(conformable or not), for example \cite{Flei}:
\begin{enumerate}
    \item If $\alpha \in (0,1]$ and $T(t,\alpha )=t^{1-\alpha }$, then the conformable fractional derivative defined in \cite{Khalil} is obtained.
    \item If $\a \in (0,1]$ and $T(t,\alpha )=k(t)^{1-\alpha }$, then the general conformable fractional derivative defined in \cite{Almeida} is derived.
    \item If $\a \in (0,1]$ and $T(t,\alpha )=e^{t^{-\alpha }}$, then the non-conformable fractional derivative defined in \cite{GLLMN} is obtained.
    \item If $\a \in (0,1]$ and $T(t,\alpha )={ \left( t+\frac { 1 }{ \Gamma (\alpha ) }  \right)  }^{ 1-\alpha  }$, then we obtain the beta-derivative defined in \cite{Atan}.
\end{enumerate}

In \cite{FNRS} the definition of generalized fractional derivative is given as follows.

Given $s \in \RR$, we denote by $\lceil s \rceil$ the \emph{upper integer part} of $s$, i.e., the smallest integer
greater than or equal to $s$.

\begin{definition}
\label{d:g0}
Given an interval $I \subseteq \RR$, $f: I \rightarrow {\mathbb{R}}$, $\alpha \in \RR^+$ and a positive
continuous function $T(t,\alpha )$ on $I$,
the \emph{derivative} $G_{T}^{\alpha }f$ of $f$ of order $\alpha$ at the point $t \in I$ is defined by
\begin{align}
G_{T}^{\alpha }f(t) & =\lim_{h\to 0} \frac{1}{h^{\lceil \a \rceil}}\nonumber\\
& \times \sum_{k=0}^{\lceil \a \rceil}(-1)^{k}\binom{\lceil \a \rceil}{k}f\big(t-khT(t,\alpha )\big) .
\end{align}
\end{definition}

If $a=\min\{t \in I\}$ (respectively, $b=\max\{t \in I\}$), then $G_{T}^{\alpha }f(a)$ (respectively, $G_{T}^{\alpha }f(b)$) is defined with
$h\to 0^-$ (respectively, $h\to 0^+$) instead of $h\to 0$ in the limit.\\

If we choose the function $T(t,\alpha )= t^{\lceil \a \rceil-\alpha }$, then we obtain the following particular case of the function $G_{T}^{\alpha }$ which is a conformable derivative.

\begin{definition}
\label{d:g}
Let $I$ be an interval $I \subseteq (0,\infty)$, $f: I \rightarrow {\mathbb{R}}$ and $\alpha \in \RR^+$. The
\emph{conformable derivative} $%
G^\a f$ of $f$ of order $\alpha$ at the point $t \in I$ is defined by
\begin{align}
G^{\alpha }f(t)
& =\lim_{h\rightarrow 0} \frac{1}{h^{\lceil \a \rceil}} \nonumber\\
& \times \sum_{k=0}^{\lceil \a \rceil}(-1)^{k}\binom{\lceil \a \rceil}{k}
f\big(t-kht^{\lceil \a \rceil-\alpha }\big).
\end{align}
\end{definition}

Note that, if $\alpha =n \in \NN$ and $f$ is smooth enough, then Definition \ref{d:g} coincides with the classical definition of the $n$-th derivative.\\

In \cite{Khalil} is defined a conformable derivative $T_{\alpha }$ which is a particular case of $G^{\alpha }$ when $\alpha \in (0,1]$ and $T(t,\alpha )=
t^{1-\alpha }$. See \cite{Abdejjawad}, \cite{Jarad} and \cite{Katugampola} for more information on $T_{\alpha }$.

\medskip

The following results in \cite{FNRS} contain some basic properties of the derivative $G_{T}^{\a}$.
\begin{theorem}
\label{t:comp}
Let $I$ be an interval $I \subseteq \RR$, $f: I \rightarrow {\mathbb{R}}$ and $\alpha \in \RR^+$.

$(1)$ If there exists $D^{\lceil \a \rceil}f$ at the point $t \in I$, then $f$ is $G_{T}^\a$-differentiable at $t$ and
$G_{T}^{\alpha }f(t)= T(t,\alpha )^{\lceil \a \rceil}D^{\lceil \a \rceil}f(t)$.

$(2)$ If $\a \in (0,1]$, then $f$ is $G_{T}^\a$-differentiable at $t \in I$ if and only if $f$ is differentiable at
$t$;
in this case, we have $G_{T}^{\alpha }f(t)= T(t,\alpha ) f'(t)$.
\end{theorem}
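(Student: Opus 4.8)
The plan is to notice that, for a fixed point $t$, the quantity $T(t,\a)$ is simply a positive constant, so the alternating sum in Definition~\ref{d:g0} is the $n$-th iterated backward difference of $f$ at $t$ with step $\eta:=hT(t,\a)$, where $n:=\lceil\a\rceil$. After the change of variable $\eta=hT(t,\a)$ one has
\[
\frac{1}{h^{n}}\sum_{k=0}^{n}(-1)^{k}\binom{n}{k}f\big(t-kh\,T(t,\a)\big)
=\frac{T(t,\a)^{n}}{\eta^{n}}\sum_{k=0}^{n}(-1)^{k}\binom{n}{k}f(t-k\eta),
\]
and since $T(t,\a)>0$, $\eta\to0$ exactly when $h\to0$ (with the same one–sided convention when $t$ is an endpoint of $I$, in which case every abscissa $t-k\eta$ still lies in $I$). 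So the whole statement reduces to the classical behaviour of finite differences as the step tends to zero.

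For part~$(1)$ I would assume $D^{n}f(t)$ exists and expand $f$ around $t$ by Taylor's formula with the Peano remainder, $f(t+s)=\sum_{j=0}^{n}\frac{D^{j}f(t)}{j!}s^{j}+o(s^{n})$. Substituting $s=-k\eta$ for $k=0,\dots,n$ — only finitely many terms, so the remainders add up to a single $o(\eta^{n})$ — and using the combinatorial identities $\sum_{k=0}^{n}(-1)^{k}\binom{n}{k}k^{j}=0$ for $0\le j<n$ and $(-1)^{n}n!$ for $j=n$, all lower-order terms cancel and the sum becomes $D^{n}f(t)\,\eta^{n}+o(\eta^{n})$. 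Dividing by $\eta^{n}$, multiplying by $T(t,\a)^{n}$, and letting $h\to0$ yields $G_{T}^{\a}f(t)=T(t,\a)^{n}D^{n}f(t)$. Alternatively, this limit could be obtained by induction on $n$ via repeated use of L'Hôpital's rule in the variable $h$.

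For part~$(2)$ the hypothesis $\a\in(0,1]$ makes $n=1$, so the definition collapses to the single quotient $\frac{1}{h}\big(f(t)-f(t-hT(t,\a))\big)=T(t,\a)\,\frac{f(t)-f(t-\eta)}{\eta}$. Since $T(t,\a)\ne0$, the correspondence $h\mapsto\eta$ is a homeomorphism between punctured (one-sided, at endpoints) neighbourhoods of $0$, so the limit as $h\to0$ exists if and only if $\lim_{\eta\to0}\big(f(t)-f(t-\eta)\big)/\eta$ exists; after replacing $\eta$ by $-\eta$ this is exactly the ordinary derivative $f'(t)$. Hence $f$ is $G_{T}^{\a}$-differentiable at $t$ iff $f$ is differentiable at $t$, with $G_{T}^{\a}f(t)=T(t,\a)f'(t)$; note that here, unlike in part~$(1)$, the implication runs both ways, which is special to $n=1$.

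The main obstacle I anticipate is purely technical: justifying the Peano-remainder step in part~$(1)$ under the minimal hypothesis that $D^{n}f$ merely exists at $t$ (which tacitly requires $D^{n-1}f$ on a neighbourhood of $t$), and checking that the finitely many $o$-terms evaluated at the shifted points $t-k\eta$ genuinely combine into $o(\eta^{n})$. Everything else is bookkeeping with the binomial identities and with the fact that $T(t,\a)$ is a fixed nonzero constant at the point $t$.
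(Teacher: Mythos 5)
Your argument is correct. Note that the paper does not actually prove Theorem~\ref{t:comp}: it is quoted from \cite{FNRS} without proof, so there is no in-paper argument to compare against. Your route is the natural one and is sound: the substitution $\eta=hT(t,\a)$ (legitimate because $T(t,\a)$ is a fixed positive number at the point $t$) reduces the defining limit to $T(t,\a)^{n}$ times the $n$-th backward difference quotient of $f$ with step $\eta$; Taylor's formula with Peano remainder, valid precisely under the hypothesis that $D^{n}f(t)$ exists, together with the identities $\sum_{k=0}^{n}(-1)^{k}\binom{n}{k}k^{j}=0$ for $j<n$ and $(-1)^{n}n!$ for $j=n$, gives part~$(1)$; and for $n=1$ the difference quotient is exactly $T(t,\a)\big(f(t)-f(t-\eta)\big)/\eta$, whence the equivalence in part~$(2)$. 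Your remark that the equivalence is special to $n=1$ (part~$(1)$ is only an implication) is also accurate, and the one-sided conventions at endpoints of $I$ are handled consistently with Definition~\ref{d:g0}.
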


\begin{theorem}
\label{t:prop}
Let $I$ be an interval $I \subseteq \RR$, $f,g: I \rightarrow {\mathbb{R}}$ and $\alpha \in \RR^+$.
Assume that $f,g$ are $G_{T}^\a$-differentiable functions at $t \in I$.
Then the following statements hold:

$(1)$ $af+b\,g$ is $G_{T}^\a$-differentiable at $t$ for every $a,b \in \mathbb{R}$, and
$G_{T}^{\alpha }(af+b\,g)(t) = a\,G_{T}^{\alpha }f(t)+b\,G_{T}^{\alpha }g(t)$.

$(2)$ If $\alpha \in (0,1]$, then $fg$ is $G_{T}^\a$-differentiable at $t$ and
$G_{T}^{\alpha }(fg)(t)=f(t)G_{T}^{\alpha }g(t)+g(t)G_{T}^{\alpha }f(t)$.

$(3)$ If $\alpha \in (0,1]$ and $g(t) \neq 0$, then $f/g$ is $G_{T}^\a$-differentiable at $t$ and
$G_{T}^{\alpha }(\frac{f}{g})(t)=\frac{g(t)G_{T}^{\alpha }f(t)-f(t)G_{T}^{\alpha }g(t)}{g(t)^{2}}$.

$(4)$ $G_{T}^{\alpha }(\lambda )=0$, for every $\lambda \in \mathbb{R}.$

$(5)$ $G_{T}^{\alpha }(t^{p}) = \frac{\G(p+1)}{\G(p-\lceil\alpha \rceil+1)}t^{p-\lceil\alpha \rceil} T(t,\alpha
)^{{\lceil\alpha \rceil}}$, for every $p\in \mathbb{R} \setminus \ZZ^-\!.$

$(6)$ $G_{T}^{\alpha }(t^{-n}) = (-1)^{\lceil\alpha \rceil}\frac{\G(n+\lceil\alpha \rceil)}{\G(n)}\, t^{-n-\lceil\alpha
\rceil} T(t,\alpha )^{{\lceil\alpha \rceil}}$, for every $n\in \mathbb{Z}^+.$
\end{theorem}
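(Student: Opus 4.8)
The plan is to prove Theorem \ref{t:prop} by reducing almost every item to Theorem \ref{t:comp}, which converts $G_T^\alpha$ into the ordinary iterated derivative $D^{\lceil\alpha\rceil}$ multiplied by the factor $T(t,\alpha)^{\lceil\alpha\rceil}$. For item $(1)$, linearity, I would simply invoke Theorem \ref{t:comp}(1) if $f$ and $g$ are $\lceil\alpha\rceil$-times differentiable, but since the hypothesis only assumes $G_T^\alpha$-differentiability I would instead argue directly from Definition \ref{d:g0}: the defining expression is a finite linear combination of difference quotients, so the limit of a linear combination is the linear combination of the limits, and the factor $h^{-\lceil\alpha\rceil}$ and the binomial weights pass through unchanged. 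This gives $(1)$ with no restriction on $\alpha$, matching the statement.

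For items $(2)$ and $(3)$, since $\alpha\in(0,1]$ we have $\lceil\alpha\rceil=1$, so Theorem \ref{t:comp}(2) tells us that $G_T^\alpha$-differentiability is equivalent to ordinary differentiability and $G_T^\alpha f(t)=T(t,\alpha)f'(t)$. Then $(2)$ follows from the classical product rule: $G_T^\alpha(fg)(t)=T(t,\alpha)(fg)'(t)=T(t,\alpha)\big(f'(t)g(t)+f(t)g'(t)\big)=g(t)G_T^\alpha f(t)+f(t)G_T^\alpha g(t)$, and $(3)$ follows identically from the classical quotient rule once we note $g(t)\neq0$ guarantees $f/g$ is differentiable at $t$. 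Items $(4)$, $(5)$, $(6)$ are pure computations: for $(4)$ a constant has $D^{\lceil\alpha\rceil}(\lambda)=0$, and for $(5)$ and $(6)$ one computes $D^{\lceil\alpha\rceil}(t^p)$ by iterating the power rule $\lceil\alpha\rceil$ times, which produces the falling factorial $p(p-1)\cdots(p-\lceil\alpha\rceil+1)=\Gamma(p+1)/\Gamma(p-\lceil\alpha\rceil+1)$ (valid for $p\notin\ZZ^-$ so the Gamma quotient is well defined), then multiplies by $T(t,\alpha)^{\lceil\alpha\rceil}$ via Theorem \ref{t:comp}(1); item $(6)$ is the specialization $p=-n$, where the falling factorial $(-n)(-n-1)\cdots(-n-\lceil\alpha\rceil+1)=(-1)^{\lceil\alpha\rceil}n(n+1)\cdots(n+\lceil\alpha\rceil-1)=(-1)^{\lceil\alpha\rceil}\Gamma(n+\lceil\alpha\rceil)/\Gamma(n)$ accounts for the sign.

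The only genuine subtlety — and the step I would be most careful about — is the restriction to $\alpha\in(0,1]$ in $(2)$ and $(3)$. For higher $\alpha$ the Leibniz rule for $D^{\lceil\alpha\rceil}(fg)$ produces all the intermediate terms $\binom{\lceil\alpha\rceil}{j}D^j f\,D^{\lceil\alpha\rceil-j}g$, which cannot be repackaged in terms of $G_T^\alpha f$ and $G_T^\alpha g$ alone, so the clean product rule genuinely fails and the hypothesis $\alpha\in(0,1]$ is essential rather than cosmetic; I would make that explicit so the reader does not expect a more general statement. A secondary point worth a sentence is that in $(2)$ and $(3)$ one should confirm the product (resp.\ quotient) is itself $G_T^\alpha$-differentiable before writing down its derivative — but this is immediate from Theorem \ref{t:comp}(2), since products and quotients (with nonvanishing denominator at $t$) of functions differentiable at $t$ are differentiable at $t$. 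With these observations in place each item is a short, self-contained argument.
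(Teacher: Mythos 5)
Your proposal is correct, and it is essentially the canonical argument: item $(1)$ directly from the linearity of the defining limit, items $(2)$--$(6)$ by reduction to the ordinary derivative via Theorem \ref{t:comp} (using that $\lceil\alpha\rceil=1$ for $(2)$ and $(3)$), with the sign and Gamma-quotient bookkeeping in $(5)$ and $(6)$ handled correctly. Note that the paper itself does not prove this theorem but imports it from \cite{FNRS}; your write-up supplies exactly the proof one would expect there, and your remark that the restriction $\alpha\in(0,1]$ in $(2)$ and $(3)$ is essential (since the general Leibniz formula for $D^{\lceil\alpha\rceil}$ cannot be expressed through $G_T^\alpha f$ and $G_T^\alpha g$ alone) is a worthwhile clarification.
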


\begin{theorem}
\label{t:chain}
Let $\alpha \in (0,1]$, $g$ a $G_{T}^\a$-differentiable function at $t$ and $f$ a differentiable function at $g(t)$.
Then $f\circ g$ is $G_{T}^{\alpha }$-differentiable at $t$, and
$G_{T}^{\alpha }(f\circ g)(t)=f'(g(t))\,G_{T}^{\alpha }g(t)$.
\end{theorem}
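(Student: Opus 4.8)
The plan is to reduce the whole statement to the ordinary chain rule by way of Theorem~\ref{t:comp}(2). The key observation is that the hypothesis $\alpha\in(0,1]$ is exactly what makes $\lceil\alpha\rceil=1$, so that part~(2) of Theorem~\ref{t:comp} applies: for any function $u$ and any point $t$, being $G_{T}^{\alpha}$-differentiable at $t$ is \emph{equivalent} to being differentiable at $t$ in the classical sense, and in that case $G_{T}^{\alpha}u(t)=T(t,\alpha)\,u'(t)$. So my first step is to translate the assumption ``$g$ is $G_{T}^{\alpha}$-differentiable at $t$'' into ``$g$ is differentiable at $t$'', recording the identity $G_{T}^{\alpha}g(t)=T(t,\alpha)\,g'(t)$.

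Next I would invoke the classical chain rule directly: since $g$ is differentiable at $t$ and $f$ is differentiable at $g(t)$, the composition $f\circ g$ is differentiable at $t$ with $(f\circ g)'(t)=f'(g(t))\,g'(t)$. Applying Theorem~\ref{t:comp}(2) a second time, now in the forward direction to the function $f\circ g$, yields that $f\circ g$ is $G_{T}^{\alpha}$-differentiable at $t$ and that $G_{T}^{\alpha}(f\circ g)(t)=T(t,\alpha)\,(f\circ g)'(t)$.

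The final step is purely algebraic: combine the two identities to get
$G_{T}^{\alpha}(f\circ g)(t)=T(t,\alpha)\,f'(g(t))\,g'(t)=f'(g(t))\,\bigl(T(t,\alpha)\,g'(t)\bigr)=f'(g(t))\,G_{T}^{\alpha}g(t)$,
which is the claimed formula. No genuine obstacle arises here; the only point requiring care is checking that the hypotheses of Theorem~\ref{t:comp}(2) are met at each of the two invocations, and in particular that the restriction $\alpha\le 1$ is essential — for $\alpha>1$ one has $\lceil\alpha\rceil\ge 2$ and the clean product-type formula already fails classically (as with the second derivative of a composition). As an alternative one could argue straight from Definition~\ref{d:g0}, expanding $f\bigl(g(t)\bigr)-f\bigl(g(t-hT(t,\alpha))\bigr)$ and routing it through the difference quotient of $f$ at $g(t)$ together with that of $g$ at $t$; but this merely re-derives the classical chain rule with extra bookkeeping, so the reduction above is preferable.
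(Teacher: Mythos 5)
Your argument is correct: since $\alpha\in(0,1]$, Theorem~\ref{t:comp}(2) makes $G_{T}^{\alpha}$-differentiability equivalent to ordinary differentiability with $G_{T}^{\alpha}u(t)=T(t,\alpha)u'(t)$, and the claim then follows from the classical chain rule exactly as you write. The paper itself states this theorem without proof, citing \cite{FNRS}, and your reduction is the standard argument used there; the only (minor) point worth a word is that at an endpoint of $I$ the same reasoning goes through with one-sided limits.
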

In \cite{FGNRS} is defined an integral operator in the following way.

Let $I$ be an interval $I \subseteq \RR$, $a,t \in I$ and $\a \in \RR$.
The integral operator $J_{T,a}^\a$ is defined for every locally integrable function $f$ on $I$ as
$$
J_{T,a}^\a(f)(t)=\int _{ a }^{ t } \frac { f(\o) }{ T(\o,\alpha ) } \; d\o.
$$

The following results appear in \cite{FGNRS}.

\begin{proposition}
\label{p:fundamental}
Let $I$ be an interval $I \subseteq \RR$, $a \in I$, $0<\alpha \le 1$ and $f$ a differentiable function on $I$ such that $f'$ is a locally integrable function on $I$.
Then, we have for all $t \in I$
$$
J_{T,a}^\a \big({ G }_{ T }^{ \alpha  }(f)\big)(t)=f(t)-f(a).
$$
\end{proposition}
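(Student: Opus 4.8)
The plan is to reduce the statement to the classical Fundamental Theorem of Calculus by unwinding the two definitions involved. First I would write out $J_{T,a}^\alpha\big(G_T^\alpha(f)\big)(t)$ explicitly using the definition of the integral operator, obtaining
\[
J_{T,a}^\alpha\big(G_T^\alpha(f)\big)(t)=\int_a^t \frac{G_T^\alpha(f)(\omega)}{T(\omega,\alpha)}\,d\omega .
\]
The key observation is that since $0<\alpha\le 1$ and $f$ is differentiable on $I$, Theorem~\ref{t:comp}(2) applies and gives $G_T^\alpha(f)(\omega)=T(\omega,\alpha)f'(\omega)$ for every $\omega\in I$. Substituting this into the integrand, the kernel $T(\omega,\alpha)$ cancels exactly, leaving $\int_a^t f'(\omega)\,d\omega$.

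Next I would invoke the classical Fundamental Theorem of Calculus: since $f$ is differentiable on $I$ and $f'$ is assumed locally integrable on $I$, we have $\int_a^t f'(\omega)\,d\omega = f(t)-f(a)$ for all $t\in I$. This is exactly the desired identity, so the proof concludes.

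The only points requiring a little care are bookkeeping rather than genuine obstacles: one should check that the substitution $G_T^\alpha(f)=T(\cdot,\alpha)f'$ is valid pointwise on the whole interval of integration (which follows from Theorem~\ref{t:comp}(2) together with the differentiability hypothesis on $f$ throughout $I$), and that the cancellation $T(\omega,\alpha)/T(\omega,\alpha)=1$ is legitimate, which it is because $T(t,\alpha)$ is by hypothesis a positive continuous function on $I$ and hence never vanishes. One should also note that $f'$ locally integrable guarantees the integral $\int_a^t f'(\omega)\,d\omega$ is well defined and that the FTC in its Lebesgue form applies. I do not anticipate any serious difficulty; the entire argument is a two-line computation once the correct form of $G_T^\alpha$ on the range $\alpha\in(0,1]$ is inserted.
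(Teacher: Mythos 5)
Your argument is correct: the paper itself states Proposition~\ref{p:fundamental} without proof (it is quoted from the reference \cite{FGNRS}), and your reduction via Theorem~\ref{t:comp}(2) --- writing $G_T^{\alpha}f(\omega)=T(\omega,\alpha)f'(\omega)$, cancelling the positive kernel inside $J_{T,a}^{\alpha}$, and applying the Fundamental Theorem of Calculus for a differentiable function with locally integrable derivative --- is exactly the standard argument this result rests on. No gaps; your side remarks about the positivity of $T$ and the Lebesgue form of the FTC cover the only delicate points.
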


\begin{proposition}
\label{p:inverse}
Let $I$ be an interval $I \subseteq \RR$, $a \in I$ and $\a \in (0,1]$.
$$
G_T^{\a} \big( J_{T,a}^\a(f)\big) (t)
= f(t),
$$
for every continuous function $f$ on $I$ and $a,t\in I$.
\end{proposition}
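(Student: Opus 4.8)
The plan is to reduce the statement to the classical Fundamental Theorem of Calculus together with part $(2)$ of Theorem~\ref{t:comp}. First I would set $F(t) := J_{T,a}^\a(f)(t) = \int_a^t \frac{f(\o)}{T(\o,\a)}\, d\o$ and observe that, since $f$ is continuous on $I$ and $T(\cdot,\a)$ is positive and continuous on $I$, the integrand $\o \mapsto f(\o)/T(\o,\a)$ is continuous on $I$ (in particular locally integrable, so $F$ is well defined). Hence, by the Fundamental Theorem of Calculus, $F$ is differentiable at every interior point $t \in I$ with $F'(t) = f(t)/T(t,\a)$, and at an endpoint of $I$ the corresponding one-sided derivative exists and equals the same expression.

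Next, since $\a \in (0,1]$ and $F$ is differentiable at $t$, Theorem~\ref{t:comp}(2) applies and gives $G_T^{\a} F(t) = T(t,\a)\, F'(t)$. Substituting $F'(t) = f(t)/T(t,\a)$ and cancelling the factor $T(t,\a)$---which is legitimate because $T(t,\a) > 0$ by hypothesis---yields $G_T^{\a}\big(J_{T,a}^{\a}(f)\big)(t) = f(t)$, as claimed.

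I expect the only point requiring care to be the bookkeeping at the endpoints of $I$: Definition~\ref{d:g0} uses a one-sided limit $h \to 0^{\pm}$ there, so one must make sure that Theorem~\ref{t:comp}(2) is being invoked in a form that covers that case. It does, since that theorem is stated for an arbitrary interval $I \subseteq \RR$, and the one-sided version of the Fundamental Theorem of Calculus supplies exactly the one-sided differentiability of $F$ that is needed. No genuinely hard step is involved; the essential content is simply that $J_{T,a}^{\a}$ is an antiderivative-type operator adapted to the weight $T(\cdot,\a)$, while $G_T^{\a}$ in the range $\a \in (0,1]$ is precisely multiplication of the ordinary derivative by that same weight, so the two operators undo one another.
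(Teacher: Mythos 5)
Your proof is correct. Note that the paper itself gives no proof of this proposition---it is quoted from reference [FGNRS]---but your argument (the Fundamental Theorem of Calculus applied to $F(t)=\int_a^t f(\o)/T(\o,\a)\,d\o$, followed by Theorem~\ref{t:comp}(2) to convert $F'(t)=f(t)/T(t,\a)$ into $G_T^{\a}F(t)=T(t,\a)F'(t)=f(t)$) is exactly the standard derivation, and your remark about one-sided derivatives at the endpoints of $I$ correctly disposes of the only delicate point.
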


\medskip

In \cite{Khalil} it is defined the integral operator $J_{T,a}^\a$ with $T$ given by $T(t,\a)= t^{1-\a}$,
and \cite[Theorem 3.1]{Khalil} shows
$$
G^{\a} J_{t^{1-\a}\!,\,a}^\a(f)(t)
= f(t),
$$
for every continuous function $f$ on $I$, $a,t \in I$ and $\a \in (0,1]$.
Hence, Proposition \ref{p:inverse} extends to any $T$ this important equality.

\medskip

The following result summarizes some elementary properties of the integral operator $J_{T,a}^\a$.

\begin{theorem}
\label{t:pos}
Let $I$ be an interval $I \subseteq \RR$, $a,b \in I$ and $\a \in \RR$.
Suppose that $f,g$ are locally integrable functions on $I$, and $k_1,k_2\in \RR$. Then we have

$(1)$ $J_{T,a}^\a \big( k_1 f+k_2 g \big) (t) =k_1 J_{T,a}^\a f(t) + k_2 J_{T,a}^\a g(t),$

$(2)$ if $f \ge g$, then $J_{T,a}^\a f(t)\ge J_{T,a}^\a g(t)$ for every $t\in I$ with $t \ge a$,

$(3)$ $\left| J_{T,a}^\a f(t) \right| \le J_{T,a}^\a \left| f \right| (t)$ for every $t\in I$ with $t \ge a$,

$(4)$ $\int _{ a }^{ b } \frac { f(\o) }{ T(\o,\alpha ) } d\o
=J_{ T,a }^{ \alpha  }f(t)-J_{ T,b }^{ \alpha  }f(t)
=J_{ T,a }^{ \alpha  }f(t)(b)$ for every $t \in I$.
\end{theorem}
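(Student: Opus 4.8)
The plan is to derive all four items directly from the corresponding elementary properties of the Riemann integral, applied to the integrand $\omega \mapsto f(\omega)/T(\omega,\alpha)$. The single underlying fact that makes everything work is that $T(\cdot,\alpha)$ is a positive continuous function on $I$ (the standing hypothesis from Definition \ref{d:g0}): hence $1/T(\cdot,\alpha)$ is positive and continuous, and therefore $f/T(\cdot,\alpha)$ is locally integrable on $I$ whenever $f$ is, so that $J_{T,a}^\alpha f(t)=\int_a^t f(\omega)/T(\omega,\alpha)\,d\omega$ is well defined for all $a,t\in I$ and all four identities/inequalities make sense.

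For $(1)$ I would simply note the pointwise identity $(k_1 f+k_2 g)/T = k_1\,(f/T)+k_2\,(g/T)$ and invoke linearity of the integral. For $(2)$, from $f\ge g$ and $1/T>0$ we get $f/T\ge g/T$ pointwise on $I$; integrating over $[a,t]$ with $t\ge a$ (so the orientation is the standard one) and using monotonicity of the integral gives $J_{T,a}^\alpha f(t)\ge J_{T,a}^\alpha g(t)$. For $(3)$, since $T>0$ we have $|f/T|=|f|/T$, so for $t\ge a$
\[
\bigl|J_{T,a}^\alpha f(t)\bigr|=\Bigl|\int_a^t \frac{f(\omega)}{T(\omega,\alpha)}\,d\omega\Bigr|\le \int_a^t \Bigl|\frac{f(\omega)}{T(\omega,\alpha)}\Bigr|\,d\omega=\int_a^t \frac{|f(\omega)|}{T(\omega,\alpha)}\,d\omega = J_{T,a}^\alpha|f|(t).
\]
For $(4)$ I would use additivity of the integral over adjacent intervals: for any $t\in I$,
\[
\int_a^b \frac{f(\omega)}{T(\omega,\alpha)}\,d\omega=\int_a^t \frac{f(\omega)}{T(\omega,\alpha)}\,d\omega+\int_t^b \frac{f(\omega)}{T(\omega,\alpha)}\,d\omega=J_{T,a}^\alpha f(t)-J_{T,b}^\alpha f(t),
\]
which in particular shows this difference is independent of the dummy upper limit $t$ and hence equals the value obtained by setting that limit to $b$.

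I expect no serious obstacle: the proof is essentially bookkeeping. The only points needing a moment's care are (i) recording that the hypothesis ``$T$ positive and continuous'' is exactly what transfers local integrability from $f$ to $f/T$, so every integral written down converges; and (ii) keeping the orientation straight, which is why $(2)$ and $(3)$ are stated only for $t\ge a$. Positivity of $T$ is used in an essential way in $(2)$ (to preserve the inequality when dividing by $T$) and in $(3)$ (to pull $T$ through the absolute value), whereas $(1)$ and $(4)$ hold regardless of the sign of $1/T$, relying only on linearity and interval additivity of the integral.
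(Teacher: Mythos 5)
Your proof is correct and is exactly the standard argument: everything reduces to linearity, monotonicity, the triangle inequality, and interval additivity of the Riemann integral applied to $\omega\mapsto f(\omega)/T(\omega,\alpha)$, with positivity and continuity of $T(\cdot,\alpha)$ guaranteeing that the integrand inherits local integrability and that division by $T$ preserves inequalities and commutes with absolute values. The paper itself states this theorem without proof (citing it as a generalization of results in \cite{GLNV}), so there is nothing to diverge from; your write-up supplies the omitted routine verification, and your reading of the garbled last expression in item $(4)$ as $J_{T,a}^{\alpha}f(b)$ is the natural one.
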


\begin{remark}
The above results generalize Proposition 1, Proposition 2 and Theorem 1 of \cite {GLNV}, respectively, obtained with $ 0<\alpha \le 1$.
\end{remark}

\medskip

Given $a,b \in I$ $(b>a)$, let us denote by $F_{a,b}$ the usual inner product in $L^2[a,b]$
$$
F_{a,b} (f,g) = \int_a^b f(t) g(t) \,dt .
$$

\begin{proposition}
\label{p:adjoint}
Let $I$ be an interval $I \subseteq \RR$, $a,b \in I$ with $a<b$ and $\a \in \RR$.
The adjoint of $J_{T,a}^\a$ in $L^2[a,b]$ with respect to the inner product $F_{a,b}$ is the operator
$$
A^\a_{T,a,b}(f)(t)
= \frac{1}{T(t,\a)}\int_t^b f(s)\, ds .
$$
\end{proposition}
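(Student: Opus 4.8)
The plan is to verify the defining relation of the adjoint directly, namely that
$$
F_{a,b}\big(J_{T,a}^\a f,\,g\big) = F_{a,b}\big(f,\,A^\a_{T,a,b}g\big)
\qquad\text{for all } f,g\in L^2[a,b],
$$
and then to note that both sides make sense because the operators involved are bounded on $L^2[a,b]$. First I would unfold the left-hand side using the definition of $J_{T,a}^\a$ and of $F_{a,b}$, obtaining the iterated integral
$$
\int_a^b \Bigl(\int_a^t \frac{f(\o)}{T(\o,\a)}\,d\o\Bigr) g(t)\,dt
= \iint_{\{a\le\o\le t\le b\}} \frac{f(\o)\,g(t)}{T(\o,\a)}\,d\o\,dt .
$$

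The key step is an application of the Fubini--Tonelli theorem to interchange the order of integration over the triangle $\{a\le\o\le t\le b\}$. After swapping, the inner integral in $t$ runs from $\o$ to $b$, giving
$$
\int_a^b \frac{f(\o)}{T(\o,\a)}\Bigl(\int_\o^b g(t)\,dt\Bigr) d\o
= \int_a^b f(\o)\,\frac{1}{T(\o,\a)}\int_\o^b g(t)\,dt\,d\o
= F_{a,b}\big(f,\,A^\a_{T,a,b}g\big),
$$
which is exactly the claimed identity; uniqueness of the adjoint then finishes the argument.

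The only point requiring care—the ``main obstacle,'' though a mild one—is justifying Fubini and checking that the operators act between the right spaces. Since $T(\,\cdot\,,\a)$ is positive and continuous on $I$, on the compact interval $[a,b]$ it attains a positive minimum $m:=\min_{[a,b]}T(\,\cdot\,,\a)>0$, so $1/T(\,\cdot\,,\a)$ is bounded by $1/m$. Hence $f/T(\,\cdot\,,\a)\in L^2[a,b]\subseteq L^1[a,b]$ and likewise $g\in L^1[a,b]$, so the integrand on the triangle is absolutely integrable with
$$
\iint_{\{a\le\o\le t\le b\}} \Bigl|\frac{f(\o)\,g(t)}{T(\o,\a)}\Bigr|\,d\o\,dt
\le \frac1m\,\|f\|_{L^1[a,b]}\,\|g\|_{L^1[a,b]}<\infty,
$$
which licenses the interchange. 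Finally, both $J_{T,a}^\a$ and $A^\a_{T,a,b}$ are Volterra-type integral operators with bounded kernels on the finite interval $[a,b]$, hence bounded (indeed Hilbert--Schmidt) operators on $L^2[a,b]$, so the notion of adjoint is well defined and the computation above identifies it.
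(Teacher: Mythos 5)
Your proof is correct, but it takes a genuinely different route from the paper. You establish the adjoint identity by writing $F_{a,b}(J_{T,a}^\a f, g)$ as a double integral over the triangle $\{a\le\o\le t\le b\}$ and invoking Fubini--Tonelli, justified by the fact that $T(\cdot,\a)$, being positive and continuous, is bounded below on the compact interval $[a,b]$, so the integrand is absolutely integrable. The paper instead integrates by parts, taking $v(t)=-\int_t^b g(s)\,ds$ as an antiderivative of $g$ and observing that the boundary term $\bigl[-J_{T,a}^\a(f)(t)\int_t^b g(s)\,ds\bigr]_{t=a}^{t=b}$ vanishes because $J_{T,a}^\a(f)(a)=0$ and $\int_b^b g=0$. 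The two computations are essentially dual to one another (integration by parts for these Volterra kernels \emph{is} a change of order of integration in disguise), but your version has a small advantage in rigor: it works directly for arbitrary $f,g\in L^2[a,b]$ with an explicit integrability check, whereas the paper's integration by parts tacitly uses the absolute continuity of $J_{T,a}^\a(f)$ and of $t\mapsto\int_t^b g$, which holds but is left unremarked. Your closing observation that both operators are Hilbert--Schmidt (hence bounded, so the adjoint is well defined) duplicates what the paper proves separately in Proposition \ref{p:HS}, and is a reasonable thing to include in a self-contained argument.
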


\begin{proof}
We obtain, by using integration by parts,
$$
\begin{aligned}
&F_{a,b} \big( J_{T,a}^\a(f) ,g \big)
 = \int_a^b g(t) J_{T,a}^\a(f)(t)\,dt
\\
& = \Big[ -J_{T,a}^\a(f)(t) \int_t^b g(s)\, ds \Big]_{t=a}^{t=b} \\
& + \int_a^b \frac{f(t)}{T(t,\a)}\int_t^b g(s)\, ds\,dt
\\
& = \int_a^b \frac{f(t)}{T(t,\a)}\int_t^b g(s)\, ds\,dt
= \int_a^b f(t)\,A^\a_{T,a,b}(g)(t)\,dt
\\
& = F_{a,b} \big( f ,A^\a_{T,a,b}(g) \big).
\end{aligned}
$$
\end{proof}

\begin{proposition}
\label{p:HS}
Let $I$ be an interval $I \subseteq \RR$, $a,b \in I$ with $a<b$ and $\a \in \RR$.
Then $J_{T,a}^\a$ is a Hilbert-Schmidt integral operator on $L^2[a,b]$, and so, a continuous and compact operator.
\end{proposition}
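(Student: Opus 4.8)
The plan is to realize $J_{T,a}^\a$ as an integral operator whose kernel lies in $L^2$ of the square $[a,b]\times[a,b]$, and then to invoke the classical fact that every Hilbert--Schmidt operator on a Hilbert space is automatically bounded and compact. So the whole argument reduces to one square-integrability estimate.

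First I would rewrite the defining formula as
$$
J_{T,a}^\a(f)(t)=\int_a^t \frac{f(\o)}{T(\o,\a)}\,d\o
=\int_a^b K(t,\o)\,f(\o)\,d\o ,
\qquad
K(t,\o)=\frac{1}{T(\o,\a)}\,\chi_{[a,t]}(\o),
$$
where $\chi_{[a,t]}$ is the indicator function of the subinterval $[a,t]$. Since $I$ is an interval containing $a$ and $b$, the compact interval $[a,b]$ is contained in $I$, and $T(\cdot,\a)$ is continuous and strictly positive on $[a,b]$ (as is assumed throughout); hence it attains a positive minimum, i.e. there is a constant $m>0$ with $T(\o,\a)\ge m$ for all $\o\in[a,b]$. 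This lower bound is the only place where the hypotheses on $T$ enter, and it is really the crux of the proof — there is no other genuine obstacle.

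Next I would estimate the Hilbert--Schmidt norm of $K$:
$$
\begin{aligned}
\|K\|_{L^2([a,b]^2)}^2
&=\int_a^b\!\!\int_a^b |K(t,\o)|^2\,d\o\,dt
\le \frac{1}{m^2}\int_a^b\!\!\int_a^b \chi_{[a,t]}(\o)\,d\o\,dt\\
&=\frac{1}{m^2}\int_a^b (t-a)\,dt=\frac{(b-a)^2}{2m^2}<\infty ,
\end{aligned}
$$
so $K\in L^2([a,b]\times[a,b])$ and $J_{T,a}^\a$ is a Hilbert--Schmidt integral operator on $L^2[a,b]$. Finally I would conclude by quoting the standard results: a Hilbert--Schmidt operator is bounded with operator norm at most $\|K\|_{L^2([a,b]^2)}$, hence continuous, and it is a norm-limit of finite-rank operators (obtained by truncating an orthonormal expansion of the kernel), hence compact. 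This gives all three assertions of the proposition.
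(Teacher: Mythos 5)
Your proof is correct and follows essentially the same route as the paper's: both realize $J_{T,a}^\a$ as an integral operator with kernel $\chi_{[a,t]}(\o)/T(\o,\a)$, verify that this kernel is square-integrable on $[a,b]^2$ using the positivity and continuity of $T(\cdot,\a)$ on the compact interval, and then invoke the standard Hilbert--Schmidt $\Rightarrow$ bounded and compact implications. The only cosmetic difference is that you bound $1/T^2$ by $1/m^2$ and keep the indicator, while the paper bounds the indicator by $1$ and integrates $1/T^2$ directly; the substance is identical.
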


\begin{proof}
Let us denote by $\chi_{_A}$ the \emph{characteristic function} of the set $A$ (i.e., $\chi_{_A}(t)=1$ if $t \in A$ and $\chi_{_A}(t)=0$ otherwise).
Then
$$
J_{T,a}^\a(f)(t)
= \int_a^t \frac{f(\o)}{T(\o,\a)}\; d\o
= \int_a^b k(t,\o) f(\o) \, d\o ,
$$
with
$$
k(t,\o)
= \chi_{_{[a,t]}}(\o)\, \frac{1}{T(\o,\a)} \,.
$$
We have
$$
\begin{aligned}
\int_a^b \int_a^b |k(t,\o)|^2 \, d\o\,dt
& \le \int_a^b \int_a^b \frac{1}{T(\o,\a)^2} \, d\o\,dt\\
& = \int_a^b \frac{b-a}{T(\o,\a)^2} \, d\o < \infty,
\end{aligned}
$$
since $T(\o,\a)$ is a positive continuous function on $I$.
Thus, $J_{T,a}^\a$ is a Hilbert-Schmidt integral operator on $L^2[a,b]$, and so, a continuous and compact operator.
\end{proof}

\medskip
The integral operator $J_{T,a}^\a$ can be applied in order to solve fractional differential equations. Let $p$ and $q$ be continuous functions on an interval $I \subseteq \RR$, $a \in I$ and $\a \in (0,1]$.
Then the solution of the linear fractional equation
\begin{equation}
\label{eq:lineargen}
G_{ T }^{ \alpha  }y+p(t)y=q(t),
\end{equation}
on $I$ is given by
\begin{equation}
\label{eq:solgen}
y(t)={ e }^{ -J_{T,a}^\a (p)(t) }\left( J_{T,a}^\a \big( q { e }^{ J_{T,a}^\a(p) }\big)(t) +C \right) .
\end{equation}

\subsection{Generalized Bernoulli Differential Equation}

Next, we prove a fractional version of Gronwall's inequality which will be useful in the study of the stability of systems of fractional differential equations. A version of this inequality was proved in \cite{Abdejjawad}.

\begin{theorem}
\label{t:gronwall}
Let $r$, $c$, $d$ and $k$ be continuous functions on the interval $[a, b]$, $d,k \ge 0$, and $\alpha \in (0,1]$, such that
$$
r(t)\le c(t)+d(t) J_{T,a}^\a(kr)(t),
$$
for all $t\in [a,b]$.
Then we have
$$
r(t)\le c(t)+d(t)J_{ T,a }^{ \alpha  }(cke^{ -J_{ T,a }^{ \alpha  }(dk) })(t) e^{ J_{ T,a }^\alpha (dk)(t)} ,
$$
for every $t\in [a,b]$.
\end{theorem}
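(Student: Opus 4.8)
The plan is to mimic the classical proof of Gronwall's inequality, but with the ordinary integral replaced throughout by the operator $J_{T,a}^\a$. Set $u(t) = J_{T,a}^\a(kr)(t)$, so that the hypothesis reads $r(t) \le c(t) + d(t) u(t)$. The first step is to apply $G_T^\a$ to $u$: since $u = J_{T,a}^\a(kr)$ with $kr$ continuous, Proposition \ref{p:inverse} gives $G_T^\a u(t) = k(t) r(t)$. Using the hypothesis and $k,d \ge 0$, we deduce the differential inequality
$$
G_T^\a u(t) = k(t) r(t) \le k(t) c(t) + k(t) d(t) u(t).
$$

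The second step is to treat this as a linear fractional inequality in $u$ and use the integrating factor $\mu(t) = e^{-J_{T,a}^\a(dk)(t)}$, exactly as one does with \eqref{eq:lineargen}--\eqref{eq:solgen}. The key computation is that $G_T^\a(\mu u) = \mu\,(G_T^\a u) + u\,(G_T^\a \mu)$ by the product rule (Theorem \ref{t:prop}(2), valid since $\a \in (0,1]$), while $G_T^\a \mu = -\mu \cdot (dk)$ by the chain rule (Theorem \ref{t:chain}) together with $G_T^\a\big(J_{T,a}^\a(dk)\big) = dk$ from Proposition \ref{p:inverse}. Hence $G_T^\a(\mu u) = \mu(G_T^\a u - dk u) \le \mu k c$. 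The third step is to integrate this inequality via $J_{T,a}^\a$: using Proposition \ref{p:fundamental} and the monotonicity in Theorem \ref{t:pos}(2) (both require $t \ge a$, which matches the setting), we get $\mu(t)u(t) - \mu(a)u(a) \le J_{T,a}^\a(\mu k c)(t)$. Since $u(a) = J_{T,a}^\a(kr)(a) = 0$, this yields
$$
u(t) \le \frac{1}{\mu(t)} J_{T,a}^\a(ck e^{-J_{T,a}^\a(dk)})(t) = e^{J_{T,a}^\a(dk)(t)}\, J_{T,a}^\a\big(ck e^{-J_{T,a}^\a(dk)}\big)(t).
$$
Substituting this bound for $u(t)$ back into $r(t) \le c(t) + d(t)u(t)$ gives precisely the claimed inequality.

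I expect the main obstacle to be justifying the manipulations rigorously: $u$ as defined is differentiable in the $G_T^\a$ sense (this is exactly what Proposition \ref{p:inverse} provides), but applying the product rule to $\mu u$ and then "integrating an inequality" requires that the intermediate functions be regular enough for Propositions \ref{p:fundamental} and \ref{p:inverse} to apply — i.e., that $\mu u$ is differentiable with locally integrable $G_T^\a$-derivative, and that $\mu k c$ is continuous (which it is, being built from continuous functions and exponentials of $J_{T,a}^\a$ of a continuous function). One should also double-check the direction of the monotonicity step: $J_{T,a}^\a$ preserves the inequality only for $t \ge a$, so the whole argument is confined to $[a,b]$ with the orientation $a \le t$, consistent with the statement. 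Apart from these regularity bookkeeping points, the argument is a direct transcription of the standard one.
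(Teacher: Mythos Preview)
Your proposal is correct and follows essentially the same route as the paper: define $R=J_{T,a}^\a(kr)$ (your $u$), use the integrating factor $E=e^{-J_{T,a}^\a(dk)}$ (your $\mu$), apply the product and chain rules to bound $G_T^\a(ER)\le ckE$, then integrate with $J_{T,a}^\a$ using $R(a)=0$. The paper's proof is identical in structure, differing only in notation and in citing Theorem~\ref{t:comp} to justify the differentiability of $ER$ needed for Proposition~\ref{p:fundamental}.
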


\begin{proof}
Let us define $R(t)= J_{T,a}^\a(kr)(t)$.
Thus, $R(a)=0$,
$$
r(t)\le c(t)+d(t) R(t)
$$
for all $t\in [a,b]$,
and Proposition \ref{p:inverse} gives
\begin{equation}
\label{eq:gronwall}
G_{ T }^{ \alpha  }R(t)
= k(t)r(t)
\le c(t)k(t) + d(t)k(t) R(t),
\end{equation}
for every $t\in [a,b]$, since $k\ge 0$.
Let us define $E(t)= e^{ -J_{T,a}^\a (dk)(t)}$.
Theorem \ref{t:chain} and Proposition \ref{p:inverse} give
$$
\begin{aligned}
G_{ T }^{ \alpha  } E(t)
& = G_{ T }^{ \alpha  } \big( e^{ -J_{T,a}^\a (dk)} \big)(t)\\
& = G_{ T }^{ \alpha  } \big( -J_{T,a}^\a (dk) \big)(t) e^{ -J_{T,a}^\a (dk)(t)}\\
& = -d(t)k(t) E(t) .
\end{aligned}
$$
Since $R$ and $E$ are $G_T^\a$-differentiable on $[a,b]$,
Theorem \ref{t:prop} and \eqref{eq:gronwall} give
$$
\begin{aligned}
G_{ T }^{ \alpha  }( E R )(t)
& = E(t) G_{ T }^{ \alpha  } R(t) + R(t) G_{ T }^{ \alpha  } E(t)\\
& = E(t) G_{ T }^{ \alpha  } R(t) -d(t)k(t) R(t) E(t)
\\
& \le c(t)k(t)E(t) + d(t)k(t) R(t)E(t)\\
& -d(t)k(t) R(t) E(t) = c(t)k(t)E(t) ,
\end{aligned}
$$
for every $t \in [a,b]$, since $E \ge 0$.
Since $E(t)R(t)$ is differentiable on $[a, b]$ by Theorem \ref{t:comp},
we have that Theorem \ref{t:pos} and Proposition \ref{p:fundamental} give
$$
\begin{aligned}
J_{T,a}^\a \big( ck E \big)(t)
& \ge J_{T,a}^\a \big( G_{ T }^{ \alpha  }( ER) \big)(t)\\
& =E(t)R(t)-E(a)R(a)
=E(t)R(t),
\end{aligned}
$$
for any $t\in [a,b]$.
Thus, since $d, E \ge 0$,
$$
\begin{aligned}
r(t)
& \le c(t)+d(t) R(t)
\le c(t)+d(t) \frac{J_{T,a}^\a \big( ck E \big)(t)}{E(t)}
\\
& = c(t)+d(t) J_{T,a}^\a \big( ck e^{-J_{T,a}^\a(dk)} \big)(t)\, e^{J_{T,a}^\a(dk)(t)},
\end{aligned}
$$
for every $t\in [a,b]$.
\end{proof}

\medskip

The argument in the proof of Theorem \ref{t:gronwall} also gives the following converse inequality.

\begin{theorem}
\label{t:gronwallconverse}
Let $r$, $c$, $d$ and $k$ be continuous functions on the interval $[a, b]$, $d,k \ge 0$, and $\alpha \in (0,1]$, such that
$$
r(t)\ge c(t)+d(t) J_{T,a}^\a(kr)(t),
$$
for all $t\in [a,b]$.
Then we have
$$
r(t)\ge c(t)+d(t)J_{ T,a }^\alpha (cke^{ -J_{ T,a }^\alpha (dk) })(t) e^{ J_{ T,a }^{ \alpha  }(dk)(t)} ,
$$
for every $t\in [a,b]$.
\end{theorem}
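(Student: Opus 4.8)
The plan is to mirror the proof of Theorem~\ref{t:gronwall} almost verbatim, reversing every inequality. First I would set $R(t)= J_{T,a}^\a(kr)(t)$, so that $R(a)=0$ and $r(t)\ge c(t)+d(t)R(t)$ on $[a,b]$; Proposition~\ref{p:inverse} then yields
$$
G_{ T }^{ \alpha  }R(t)= k(t)r(t)\ge c(t)k(t)+d(t)k(t)R(t),
$$
where the direction of the inequality is preserved because $k\ge 0$. Next I would introduce the same integrating factor $E(t)= e^{-J_{T,a}^\a(dk)(t)}$, whose $G_T^\a$-derivative is computed exactly as before using Theorem~\ref{t:chain} and Proposition~\ref{p:inverse} to give $G_T^\a E(t)= -d(t)k(t)E(t)$; this identity is an equality, so it carries over unchanged.

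Then I would apply the product rule (Theorem~\ref{t:prop}) to $ER$ and substitute the reversed differential inequality: since $E\ge 0$,
$$
G_{ T }^{ \alpha  }(ER)(t)= E(t)G_{ T }^{ \alpha  }R(t)-d(t)k(t)R(t)E(t)\ge c(t)k(t)E(t),
$$
the two $d k R E$ terms again cancelling. Integrating with $J_{T,a}^\a$, which preserves the order of inequalities on $[a,b]$ by part~(2) of Theorem~\ref{t:pos}, and using Proposition~\ref{p:fundamental} together with $E(a)R(a)=0$, gives
$$
J_{T,a}^\a\big(ckE\big)(t)\le E(t)R(t).
$$
Finally, dividing by $E(t)>0$ and using $d\ge 0$ yields $d(t)R(t)\ge d(t)J_{T,a}^\a(ckE)(t)/E(t)$, and adding $c(t)$ produces
$$
r(t)\ge c(t)+d(t)J_{T,a}^\a\big(cke^{-J_{T,a}^\a(dk)}\big)(t)\,e^{J_{T,a}^\a(dk)(t)},
$$
as claimed.

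The argument is essentially mechanical, so there is no serious obstacle; the only point requiring a moment's care is checking that every inequality-reversing step is legitimate — specifically that multiplying the differential inequality for $G_T^\a R$ by the nonnegative factor $E$ preserves the direction, and that $J_{T,a}^\a$ is monotone on the relevant range $t\ge a$ (Theorem~\ref{t:pos}(2)). Since $d$, $k$, and $E$ are all nonnegative and the cancellation of the $dkRE$ terms is an identity independent of the inequality direction, the reversal goes through cleanly, and the final division by $E(t)>0$ is harmless. I would therefore present the proof compactly, noting at the start that it is identical to that of Theorem~\ref{t:gronwall} with all inequalities reversed, and then display the three key inequalities above to make the reversal transparent.
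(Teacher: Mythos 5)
Your proof is correct and is exactly what the paper intends: the paper gives no separate proof of Theorem~\ref{t:gronwallconverse}, merely remarking that the argument of Theorem~\ref{t:gronwall} carries over, and your step-by-step reversal (with the checks that $k$, $d$, $E\ge 0$ and that $J_{T,a}^\a$ is monotone for $t\ge a$) is precisely that argument. No gaps.
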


\medskip

If $c\ge 0$, then the conclusion of Theorem \ref{t:gronwall} can be simplified.

\begin{theorem}
Let $r$, $c$, $d$ and $k$ be continuous functions on the interval $[a, b]$, $c,d,k \ge 0$, and $\alpha \in (0,1]$, such that
$$
r(t)\le c(t)+d(t) J_{T,a}^\a(kr)(t),
$$
for all $t\in [a,b]$.
Then we have
$$
r(t)\le c(t) + d(t) J_{T,a}^\a ( ck )(t) \, e^{ J_{T,a}^\a(dk)(t) },
$$
for every $t\in [a,b]$.
\end{theorem}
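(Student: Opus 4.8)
The plan is to obtain this as an immediate simplification of Theorem~\ref{t:gronwall}, exploiting the extra sign hypothesis $c\ge 0$ to discard an awkward exponential weight. First I would note that all the hypotheses of Theorem~\ref{t:gronwall} are satisfied (indeed $c,d,k$ are continuous, $d,k\ge 0$, $\alpha\in(0,1]$, and the integral inequality holds), so that theorem gives
$$
r(t)\le c(t)+d(t)\,J_{ T,a }^{ \alpha  }\big(cke^{ -J_{ T,a }^{ \alpha  }(dk) }\big)(t)\; e^{ J_{ T,a }^\alpha (dk)(t)}
$$
for every $t\in[a,b]$. It remains only to bound the integral-operator term on the right.

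Next I would observe that, for every $\omega\in[a,t]$, the quantity $J_{T,a}^\alpha(dk)(\omega)=\int_a^\omega \frac{d(s)k(s)}{T(s,\alpha)}\,ds$ is nonnegative, since $d,k\ge 0$ and $T(\cdot,\alpha)$ is positive on $I$; hence $e^{-J_{T,a}^\alpha(dk)(\omega)}\le 1$ on $[a,t]$. Because $c,k\ge 0$ as well, this yields the pointwise estimate $c(\omega)k(\omega)e^{-J_{T,a}^\alpha(dk)(\omega)}\le c(\omega)k(\omega)$ for $\omega\in[a,t]$. Applying the monotonicity of the integral operator, Theorem~\ref{t:pos}(2), then gives $J_{T,a}^\alpha\big(cke^{-J_{T,a}^\alpha(dk)}\big)(t)\le J_{T,a}^\alpha(ck)(t)$ for every $t\ge a$. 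Multiplying this inequality by the nonnegative factor $d(t)\,e^{J_{T,a}^\alpha(dk)(t)}$ and adding $c(t)$ to both sides, I combine it with the displayed bound above to conclude
$$
r(t)\le c(t)+d(t)\,J_{T,a}^\alpha(ck)(t)\; e^{J_{T,a}^\alpha(dk)(t)},
$$
for every $t\in[a,b]$, which is the desired inequality.

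I do not expect any genuine obstacle here: the statement is essentially a one-line corollary of Theorem~\ref{t:gronwall}, the key point being that the hypothesis $c\ge 0$ converts the factor $e^{-J_{T,a}^\alpha(dk)}$ into a quantity bounded by $1$, which can then be dropped via positivity of $c,k$ and monotonicity of $J_{T,a}^\alpha$. The only place requiring a moment's care is that the monotonicity in Theorem~\ref{t:pos}(2) and the sign of $J_{T,a}^\alpha(dk)$ are used for arguments $\ge a$, which is precisely the range on which the operators are being evaluated.
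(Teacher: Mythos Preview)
Your proposal is correct and follows essentially the same approach as the paper's own proof: apply Theorem~\ref{t:gronwall}, then use $d,k\ge 0$ together with Theorem~\ref{t:pos} to get $J_{T,a}^\alpha(dk)\ge 0$, hence $e^{-J_{T,a}^\alpha(dk)}\le 1$, and finally use $c,k\ge 0$ and the monotonicity of $J_{T,a}^\alpha$ to drop the exponential weight. Your write-up is in fact slightly more explicit than the paper's, since you spell out that the nonnegativity of $d(t)e^{J_{T,a}^\alpha(dk)(t)}$ is what lets you multiply through by it.
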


\begin{proof}
Theorem \ref{t:gronwall} gives
$$
r(t)\le c(t) + d(t) J_{T,a}^\a\big( cke^{ -J_{T,a}^\a(dk) } \big)(t) \, e^{ J_{T,a}^\a(dk)(t) },
$$
for every $t\in [a,b]$.
Since $d \ge 0$, it suffices to prove that
\begin{equation}
\label{eq:gr}
J_{T,a}^\a\big( cke^{ -J_{T,a}^\a(dk) } \big)(t)
\le J_{T,a}^\a ( ck )(t)
\end{equation}
for any $t\in [a,b]$.

Since $dk \ge 0$, Theorem \ref{t:pos} gives $J_{T,a}^\a(dk) \ge 0$ and so, $e^{ -J_{T,a}^\a(dk)} \le 1$.
Since $ck \ge 0$, we have $cke^{ -J_{T,a}^\a(dk)} \le ck$,
and Theorem \ref{t:pos} gives \eqref{eq:gr}.
\end{proof}

The following result appears in \cite{FGNRS}.

\begin{lemma}
\label{t:Picard}
Let $\a_1,\a_2,\dots,\a_n \in (0,1]$, $x=(x_1, \dots, x_n)$,
$I \subseteq \RR$ an interval, $\O \subseteq \RR^n$ an open set,
$t_0 \in I$ and $x_0 \in \O$.
Let $F=(F_1,\dots,F_n): I\times \O \rightarrow \RR^n$ be in $(C, Lip)$ on some open neighborhood of the point $(t_0, x_0)$, 
and consider the initial value problem
\begin{equation}
\label{eq:Picard}
G_T^{\a_j} x_j = F_j(t,x), \quad  1\le j \le n,  \qquad x(t_0) = x_0.
\end{equation}
Then there exists $h > 0$ such that \eqref{eq:Picard} has a unique solution on the interval $[t_0 - h, t_0 + h] \cap I$. Furthermore, if $\O = \RR^n$ and $F$ is in $(C, Lip)$ on $J \times \RR^n$ for each compact interval $J \subseteq I$, then \eqref{eq:Picard} has a unique solution on $I$.
\end{lemma}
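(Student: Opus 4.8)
The plan is to run the Picard--Lindel\"of scheme in the generalized setting: reformulate \eqref{eq:Picard} as an integral system and solve it by the Banach fixed point theorem. A useful preliminary observation is that, since each $\a_j\in(0,1]$, Theorem \ref{t:comp}$(2)$ makes $G_T^{\a_j}x_j = F_j(t,x)$ equivalent to the ordinary equation $x_j' = F_j(t,x)/T(t,\a_j)$, so \eqref{eq:Picard} is nothing but the classical first-order system $x' = \widetilde F(t,x)$, $x(t_0)=x_0$ with $\widetilde F_j := F_j/T(\cdot,\a_j)$; this already shows the statement can be deduced from the classical Picard theorem, but I would carry out the argument intrinsically via $J_{T,t_0}^{\a_j}$. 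The first step is to show that \eqref{eq:Picard} is equivalent to
$$
x_j(t) \;=\; x_{0,j} + J_{T,t_0}^{\a_j}\big(F_j(\cdot,x)\big)(t), \qquad 1\le j\le n .
$$
If $x$ solves \eqref{eq:Picard}, then each $x_j$ is $G_T^{\a_j}$-differentiable, hence differentiable with $x_j'=F_j(\cdot,x)/T(\cdot,\a_j)$ continuous by Theorem \ref{t:comp}, so Proposition \ref{p:fundamental} applies and gives $x_j(t)-x_j(t_0)=J_{T,t_0}^{\a_j}(F_j(\cdot,x))(t)$; conversely, a continuous $x$ satisfying the integral system makes $F_j(\cdot,x)$ continuous, so Proposition \ref{p:inverse} and Theorem \ref{t:prop}$(1),(4)$ return $G_T^{\a_j}x_j=F_j(t,x)$, and putting $t=t_0$ recovers $x(t_0)=x_0$ because $J_{T,t_0}^{\a_j}$ vanishes at $t_0$.

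The second step is to localize and contract. Since $F$ is in $(C, Lip)$ near $(t_0,x_0)$, fix $\rho,h_0>0$ so that $F$ is bounded by $M$ and $K$-Lipschitz in $x$ on $\big([t_0-h_0,t_0+h_0]\cap I\big)\times\overline{B(x_0,\rho)}$, and put $L:=\max_{1\le j\le n}\sup_{[t_0-h_0,t_0+h_0]\cap I}1/T(\cdot,\a_j)$, which is finite because each $T(\cdot,\a_j)$ is positive and continuous. On the complete metric space of continuous maps from $[t_0-h,t_0+h]\cap I$ into $\overline{B(x_0,\rho)}$ with the supremum norm, define $\Phi$ by $(\Phi x)_j(t)=x_{0,j}+J_{T,t_0}^{\a_j}(F_j(\cdot,x))(t)$. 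Theorem \ref{t:pos}$(3)$ gives $\|\Phi x-x_0\|_\infty\le MLh$ and $\|\Phi x-\Phi y\|_\infty\le KLh\,\|x-y\|_\infty$, so choosing $h\le h_0$ with $MLh\le\rho$ and $KLh<1$ makes $\Phi$ map this space into itself and be a contraction; its unique fixed point, supplied by the Banach fixed point theorem, is by the equivalence of the first step the unique solution of \eqref{eq:Picard} on $[t_0-h,t_0+h]\cap I$.

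For the global statement, assume $\O=\RR^n$ and that $F$ is in $(C, Lip)$ on $J\times\RR^n$ for every compact $J\subseteq I$. Exhausting $I$ by compact intervals $J$ containing $t_0$, the same $\Phi$ is now well defined on all of $C(J,\RR^n)$, and iterating the Lipschitz bound (the $m$-fold iterated integral contributes the factor $|t-t_0|^m/m!$) yields $\|\Phi^m x-\Phi^m y\|_\infty\le (KL\,\mathrm{diam}\,J)^m/m!\;\|x-y\|_\infty$, which is a genuine contraction once $m$ is large, so $\Phi$ has a unique fixed point on each such $J$; uniqueness forces these to agree on overlaps, producing a unique solution on all of $I$. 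I do not expect a conceptual obstacle here, since the result is a routine adaptation of Picard--Lindel\"of; the step that needs the most care is really the bookkeeping in the first step, namely verifying that the regularity hypotheses of Propositions \ref{p:fundamental} and \ref{p:inverse} hold for each component (so that \eqref{eq:Picard} and the integral system are genuinely equivalent) and carrying the $n$ distinct orders $\a_j$ and kernels $T(\cdot,\a_j)$ through the estimates in a uniform way.
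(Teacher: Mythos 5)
The paper does not prove this lemma at all: it is imported verbatim from \cite{FGNRS} ("The following result appears in \cite{FGNRS}"), so there is no in-paper argument to compare against. Judged on its own, your proof is correct and is the natural one. The key observation---that Theorem \ref{t:comp}$(2)$ turns each equation $G_T^{\a_j}x_j=F_j(t,x)$ into the classical ODE $x_j'=F_j(t,x)/T(t,\a_j)$, with the right-hand side still continuous and locally Lipschitz in $x$ because each $T(\cdot,\a_j)$ is positive and continuous---already reduces the lemma to the classical Picard--Lindel\"of theorem, and your intrinsic fixed-point argument via $J_{T,t_0}^{\a_j}$ is just that reduction written out; the equivalence with the integral system via Propositions \ref{p:fundamental} and \ref{p:inverse}, which you check carefully, mirrors Proposition \ref{p:equivalence} in the paper. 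Two small points of bookkeeping: Theorem \ref{t:pos}$(3)$ is stated only for $t\ge a$, so for $t<t_0$ you should invoke the elementary bound $\big|\int_{t_0}^t f/T\big|\le\big|\int_{t_0}^t |f|/T\big|$ directly rather than cite it; and you should shrink $h_0$ so that $[t_0-h_0,t_0+h_0]\cap I$ is a compact subinterval of $I$ (always possible since $t_0\in I$), which is what guarantees that your constant $L=\max_j\sup 1/T(\cdot,\a_j)$ is finite. Neither is a gap, and the global part via the iterated-contraction estimate with the $|t-t_0|^m/m!$ factor is standard and sound.
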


Let $\a \in (0,1]$, $a \in \RR$, $t_0 \ge a$, $x_0 \in \RR^n$,
and $F: [a,\infty) \times \RR^n \rightarrow \RR^n$ in $(C, Lip)$ on $[a,\infty) \times \RR^n$.
Let us consider the initial value problem
\begin{equation}
\label{eq:Cauchy}
{ G }_{ T }^{ \alpha  }x(t)=F(t,x),\quad \quad
x(t_0)={ x }_{ 0 },
\end{equation}
Lemma \ref{t:Picard} guarantees that \eqref{eq:Cauchy} has a unique solution on $[a,\infty)$.
The study of boundedness of solutions of a differential equation, either fractional or not, plays an important role in qualitative theory.
In addition, the qualitative behavior of solutions plays an important role in many real-world phenomena related to applied research.
Based on the previous results, we can obtain stability results for the solutions of fractional differential equations.

\begin{definition}
If $F(t,0)=0$ for every $t \ge a$, then the trivial solution $x\equiv 0$ of \eqref{eq:Cauchy} is said to be
\emph{stable} if for any $\varepsilon >0$, there exists $\delta =\delta (t_0,\varepsilon )>0$
such that if $\left| x_{ 0 } \right| <\delta $, then $\left| x(t) \right| <\varepsilon$ for every $t \ge t_0$;
it is \emph{uniformly stable} if there exists $\delta =\delta (\varepsilon )>0$
such that if $\left| x_{ 0 } \right| <\delta $, then $\left| x(t) \right| <\varepsilon$ for every $t \ge t_0 \ge a$.
\end{definition}

\smallskip

Propositions \ref{p:fundamental} and \ref{p:inverse} give the following result.

\begin{proposition}
\label{p:equivalence}
Let $\a \in (0,1]$, $a \in \RR$, $t_0 \ge a$, $x_0 \in \RR^n$,
and $F: [a,\infty) \times \RR^n \rightarrow \RR^n$ in $(C, Lip)$ on $[a,\infty) \times \RR^n$.
Then the problem \eqref{eq:Cauchy} is equivalent to
\begin{equation}
\label{eq:eqintegral}
x(t)={ x }_{ 0 }+J_{ T,t_0 }^{\a }F(s,x(s))(t).
\end{equation}
\end{proposition}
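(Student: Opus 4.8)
The plan is to establish the stated equivalence by the usual two-sided argument, using the two ``fundamental theorem of calculus'' statements available for the operators $G_T^\a$ and $J_{T,t_0}^\a$, namely Propositions \ref{p:fundamental} and \ref{p:inverse}. Both of these require $\a \in (0,1]$, which is exactly the standing hypothesis here, and both carry a mild regularity condition that I will check holds automatically for genuine solutions.

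First I would show that every solution $x$ of \eqref{eq:Cauchy} solves \eqref{eq:eqintegral}. By Theorem \ref{t:comp}$(2)$, since $\a \in (0,1]$ and $G_T^\a x(t)=F(t,x(t))$ exists on $[a,\infty)$, the function $x$ is differentiable there with $x'(t)=F(t,x(t))/T(t,\a)$; because $F$ is continuous, $x$ is continuous, and $T(t,\a)$ is positive and continuous, $x'$ is continuous and hence locally integrable. Thus Proposition \ref{p:fundamental} applies to $f=x$ with base point $t_0$, and applying $J_{T,t_0}^\a$ to both sides of $G_T^\a x=F(\cdot,x(\cdot))$ yields $x(t)-x(t_0)=J_{T,t_0}^\a(F(s,x(s)))(t)$, which together with $x(t_0)=x_0$ is precisely \eqref{eq:eqintegral}.

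Conversely, I would take a continuous $x$ satisfying \eqref{eq:eqintegral} and recover \eqref{eq:Cauchy}. Evaluating \eqref{eq:eqintegral} at $t=t_0$ and noting that $J_{T,t_0}^{\a}(g)(t_0)=\int_{t_0}^{t_0}\frac{g(\o)}{T(\o,\a)}\,d\o=0$ gives $x(t_0)=x_0$. Since $x$ and $F$ are continuous, the map $s\mapsto F(s,x(s))$ is continuous, so Proposition \ref{p:inverse} applies and yields that $J_{T,t_0}^\a(F(\cdot,x(\cdot)))$ is $G_T^\a$-differentiable with $G_T^\a\big(J_{T,t_0}^\a(F(\cdot,x(\cdot)))\big)(t)=F(t,x(t))$. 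Applying $G_T^\a$ to both sides of \eqref{eq:eqintegral}, using linearity (Theorem \ref{t:prop}$(1)$) together with $G_T^\a(x_0)=0$ (Theorem \ref{t:prop}$(4)$), we obtain $G_T^\a x(t)=F(t,x(t))$, so \eqref{eq:Cauchy} holds.

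The argument is short, so I do not expect a serious obstacle; the only point that needs care is the regularity bookkeeping — making sure the hypotheses of Propositions \ref{p:fundamental} and \ref{p:inverse} are genuinely met in each direction (differentiability with locally integrable derivative in the forward direction, continuity of the integrand $s\mapsto F(s,x(s))$ in the backward direction) — and keeping track of where $\a\in(0,1]$ is essential, since Theorem \ref{t:comp}$(2)$ and both propositions are stated only in that range.
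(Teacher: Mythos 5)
Your proof is correct and follows exactly the route the paper intends: the paper gives no written proof, merely stating that Propositions \ref{p:fundamental} and \ref{p:inverse} yield the result, and your two-sided argument (apply $J_{T,t_0}^\a$ in the forward direction via Proposition \ref{p:fundamental}, apply $G_T^\a$ in the converse direction via Proposition \ref{p:inverse}) is the standard way to fill in that claim. The regularity bookkeeping you carry out — continuity of $s\mapsto F(s,x(s))$ and local integrability of $x'$ via Theorem \ref{t:comp}$(2)$ — is exactly what is needed and is handled correctly.
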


\begin{theorem}
\label{t:boundedness1}
Let $\a \in (0,1]$, $a \in \RR$
and $F: [a,\infty) \times \RR^n \rightarrow \RR^n$ in $(C, Lip)$ on $[a,\infty) \times \RR^n$.
Assume that $F(t,0)=0$ for every $t \ge a$, $\left| F(t,x) \right| \le k(t)|x|$ for every $t \ge a$, $x \in \RR^n$ and some continuous function $k$ such that
$$
\int_a^\infty \frac{k(t)}{T(t,\a)}\, dt < \infty.
$$
Then the trivial solution $x\equiv 0$ of \eqref{eq:Cauchy} is uniformly stable.
\end{theorem}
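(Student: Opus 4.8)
The plan is to reduce \eqref{eq:Cauchy} to its integral form and then feed the resulting inequality into the generalized Gronwall estimate. First I would apply Proposition \ref{p:equivalence} to rewrite the Cauchy problem as $x(t) = x_0 + J_{T,t_0}^{\a}F(s,x(s))(t)$ on $[a,\infty)$ (the solution exists and is unique there by Lemma \ref{t:Picard}). Taking absolute values and using parts $(1)$, $(2)$ and $(3)$ of Theorem \ref{t:pos} together with the hypothesis $|F(t,x)|\le k(t)|x|$, I obtain
$$
|x(t)| \le |x_0| + J_{T,t_0}^{\a}\big(k\,|x|\big)(t), \qquad t \ge t_0 .
$$
Here $k \ge 0$: taking any $x \ne 0$ in the bound gives $k(t) \ge |F(t,x)|/|x| \ge 0$, so the hypotheses of the Gronwall results will be available.

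Next I would fix an arbitrary $b > t_0$ and apply the simplified form of the Gronwall inequality (the theorem immediately following Theorem \ref{t:gronwallconverse}) on $[t_0,b]$ with $r(t) = |x(t)|$, $c(t) \equiv |x_0|$, $d(t) \equiv 1$, and the given $k$; all four functions are continuous on $[t_0,b]$ and $c,d,k \ge 0$, so the hypotheses hold. Using also the linearity of $J_{T,t_0}^{\a}$ to pull out the constant $|x_0|$, this yields
$$
|x(t)| \le |x_0| + |x_0|\, J_{T,t_0}^{\a}(k)(t)\, e^{J_{T,t_0}^{\a}(k)(t)}, \qquad t \in [t_0,b],
$$
and since $b$ was arbitrary the inequality holds for every $t \ge t_0$.

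Then I would invoke the summability hypothesis to make the bound uniform. Since $k \ge 0$, $T > 0$ and $t_0 \ge a$, for every $t \ge t_0$ we have
$$
J_{T,t_0}^{\a}(k)(t) = \int_{t_0}^{t} \frac{k(\o)}{T(\o,\a)}\, d\o \le \int_a^{\infty} \frac{k(\o)}{T(\o,\a)}\, d\o =: M < \infty .
$$
Substituting gives $|x(t)| \le |x_0|\,\big(1 + M e^{M}\big)$ for every $t \ge t_0$, where the constant $1 + M e^{M}$ does not depend on $t_0$. Hence, given $\e > 0$, I would take $\d = \d(\e) = \e/(1 + M e^{M})$; if $|x_0| < \d$ then $|x(t)| < \e$ for all $t \ge t_0 \ge a$, which is exactly the definition of uniform stability of the trivial solution (which is indeed a solution, since $F(t,0)=0$).

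The norm estimate and the manipulation of $J_{T,t_0}^{\a}$ are routine given the earlier results; the only genuine point requiring care is that Theorem \ref{t:gronwall} and its corollaries are stated on a compact interval, so one must apply the inequality on each $[t_0,b]$ and let $b \to \infty$, and one must check that the resulting constant is independent of $t_0$. This last uniformity is precisely what the finiteness of $\int_a^{\infty} k/T$ buys us — the tail integral $J_{T,t_0}^{\a}(k)(t)$ is dominated by the fixed number $M$ no matter where $t_0 \ge a$ sits — and it is what upgrades plain stability to \emph{uniform} stability.
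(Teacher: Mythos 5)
Your proof is correct and follows essentially the same route as the paper: reduce to the integral equation via Proposition \ref{p:equivalence}, derive $|x(t)|\le |x_0|+J_{T,t_0}^{\a}(k|x|)(t)$, apply the generalized Gronwall inequality, and use the finiteness of $\int_a^\infty k/T$ to obtain a bound independent of $t_0$. The only (harmless) difference is that you invoke the simplified corollary of Gronwall with $c\ge 0$ and end up with the constant $1+Me^{M}$, whereas the paper uses Theorem \ref{t:gronwall} directly and telescopes the expression to get the slightly sharper constant $e^{N}$; both are independent of $t_0$, so uniform stability follows either way.
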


\begin{proof}
Let us define
$$
N=\int_a^\infty \frac{k(t)}{T(t,\a)}\, dt < \infty.
$$
Theorem \ref{t:chain}, Proposition \ref{p:fundamental} and $k \ge 0$ give for every $t \ge t_0 \ge a$
\begin{equation}
\label{eq:boundedness11bis}
\begin{aligned}
& J_{ T,t_0 }^{\a } \big( k e ^{ -J_{ T,t_0 }^{\a }(k)}\big)(t) \, e ^{ J_{ T,t_0 }^{\a }(k)(t)}\\
& = J_{ T,t_0 }^{\a } \big( G_T^\a (- e ^{ -J_{ T,t_0 }^{\a }(k)}) \big)(t) \, e ^{ J_{ T,t_0 }^{\a }(k)(t) }
\\
& = \big( e ^{ -J_{ T,t_0 }^{\a }(k)(t_0)} - e ^{ -J_{ T,t_0 }^{\a }(k)(t)}\big) \, e ^{ J_{ T,t_0 }^{\a }(k)(t)}
\\
& = \big( 1 - e ^{ -J_{ T,t_0 }^{\a }(k)(t)}\big) \, e ^{ J_{ T,t_0 }^{\a }(k)(t)}
\\
& = e ^{ J_{ T,t_0 }^{\a }(k)(t)} - 1
\\
& \le e ^{ J_{ T,a }^{\a }(k)(t)} - 1
\\
& \le e^N -1  .
\end{aligned}
\end{equation}

Fix $\varepsilon >0$ and let $x=x(t)$ be the solution of the Cauchy problem \eqref{eq:Cauchy}, where ${ x }_{ 0 }$ satisfies
\begin{equation}
\label{eq:boundedness12}
\left| { x }_{ 0 } \right|
<\delta
= \delta(\varepsilon)
= e^{-N} \varepsilon  .
\end{equation}
Proposition \ref{p:equivalence} gives
$$
x(t)={ x }_{ 0 }+J_{ T,t_0 }^{\a }F(s,x(s))(t),
$$
for every $t \ge t_0$.
Thus, Theorem \ref{t:pos} gives
$$
\begin{aligned}
\left| x(t) \right|
& \le \left| { x }_{ 0 } \right| +\left|J_{ T,t_0 }^{\a }F(s,x(s))(t)  \right|\\
&\le \left| { x }_{ 0 } \right| + J_{ T,t_0 }^{\a }\left|F(s,x(s)) \right|(t)\\
&\le \left| { x }_{ 0 } \right| + J_{ T,t_0 }^{\a }(k\left|x \right|)(t) ,
\end{aligned}
$$
for every $t \ge t_0$.
Hence, Theorem \ref{t:gronwall}, \eqref{eq:boundedness11bis} and \eqref{eq:boundedness12} give
$$
\begin{aligned}
\left| x(t) \right|
& \le \left| x_0 \right| + \left| x_0 \right| J_{ T,t_0 }^{\a } \big( k e ^{ -J_{ T,t_0 }^{\a }(k)}\big)(t) \, e ^{ J_{ T,t_0 }^{\a }(k)(t)}
\\
& \le \left| x_0 \right| + \left| x_0 \right| (e^N-1)
= \left| x_0 \right| e^N
< \e ,
\end{aligned}
$$
for all $t\ge t_0$.
Since $\d$ does not depend on $t_0$, the trivial solution of \eqref{eq:Cauchy} is uniformly stable.
\end{proof}

We will present the basic results of the Stability Theory for the  Generalized Bernoulli Differential Equation:
\begin{equation}
\label{b:1}
{ G }_{ T }^{ \alpha  }y+p(x)y=q(x){ y }^{ n }, \qquad n\neq 0,1.
\end{equation}

First, we will pose the problem for a much more general system that \ref{b:1}:

\begin{equation}
\label{e:1}
G_{ T }^{ \alpha  }y(x)=f(x,y(x)),
\end{equation}

\begin{equation}
\label{i:1}
y(x_{0})=y_{0},
\end{equation}

where $f\in C(\mathbb{R}_{+}\times
\mathbb{R},\mathbb{R})$, $x_{0}>0$. It is further assumed that for $(x_{0},y_{0})\in $ int$(
\mathbb{R}_{+}\times \mathbb{R})$ the initial value problem (\ref{e:1})-(\ref{i:1}) has a solution $y(x)\in C^{\alpha
}(I)$ for all $x>x_{0}>0$. In addition, it is assumed that $f(x,0)=0$ for
all $x>x_{0}>0$.\\

Several types of stability can be discussed for the solutions of differential equations (of integer or fractional order) that describe dynamic systems. The most important type the one related to the stability of the solutions near a point of equilibrium. This can be discussed by Lyapunov's theory. In simple terms, if solutions that start near an equilibrium point ${ y }_{ e }$ stay close to ${ y }_{ e }$ for all $x$ then ${ y }_{ e }$  it's stable Lyapunov. More strongly, if ${ y }_{ e }$ is Lyapunov stable and all solutions starting near ${ y }_{ e }$  converge to ${ y }_{ e }$, then ${ y }_{ e }$ it is asymptotically stable.

The notion of exponential stability (for linear equations or systems) guarantees a minimum rate of decay, that is, an estimate of the speed with which the solutions converge.
The stability of a solution of  fractional diferential equations can be defined in exactly the same way. The following results form a certain theory of stability for equations of the type of (\ref{e:1}). Firstly, we will state the following lemma, which will be a basis for obtaining the desired stability results.

\begin{lemma}
Let $I$ be an interval with $[t_0, \infty) \subseteq I$ and $\a \in (0,1]$.
Let $p$ be a continuous function on $[t_0, \infty)$, and consider the linear equation
\begin{equation} \label{b:2}
{ G }_{ T }^{\alpha}y+p(t)y=0.
\end{equation}

$(1)$ If $\liminf_{t \to \infty} J_{T,t_0}^\a(p)(t) > - \infty$, then the trivial solution $y(t)\equiv 0$ is stable.

$(2)$ If $\lim_{t \to \infty} J_{T,t_0}^\a(p)(t) = \infty$, then $y(t)\equiv 0$ is asymptotically stable.
\end{lemma}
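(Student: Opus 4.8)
The plan is to solve the homogeneous equation \eqref{b:2} explicitly and then read off both stability statements from the closed form of the solution. Since \eqref{b:2} is the special case $q\equiv 0$ of \eqref{eq:lineargen}, formula \eqref{eq:solgen} (with the base point taken at $t_0$, which is legitimate because $[t_0,\infty)\subseteq I$) shows that every solution of \eqref{b:2} has the form $y(t)=C\,e^{-J_{T,t_0}^{\a}(p)(t)}$ for some constant $C\in\RR$; this description of all solutions is justified exactly as in the derivation of \eqref{eq:solgen}, via Propositions \ref{p:fundamental} and \ref{p:inverse}. Imposing $y(t_0)=y_0$ and using $J_{T,t_0}^{\a}(p)(t_0)=0$ forces $C=y_0$, so the solution of the initial value problem is $y(t)=y_0\,e^{-J_{T,t_0}^{\a}(p)(t)}$ for all $t\ge t_0$. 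I would first record that $t\mapsto J_{T,t_0}^{\a}(p)(t)=\int_{t_0}^t p(\o)/T(\o,\a)\,d\o$ is continuous (indeed $G_T^\a$-differentiable, by Proposition \ref{p:inverse}), which is needed both to apply the solution formula and in the estimates below.

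For part $(1)$, I would turn the hypothesis $\liminf_{t\to\infty}J_{T,t_0}^{\a}(p)(t)>-\infty$ into a global lower bound on $[t_0,\infty)$: the $\liminf$ condition gives some $t_1\ge t_0$ beyond which $J_{T,t_0}^{\a}(p)$ is bounded below, while on the compact interval $[t_0,t_1]$ it is bounded below by continuity; hence $J_{T,t_0}^{\a}(p)(t)\ge -M$ for all $t\ge t_0$ and some $M\ge 0$. Then $|y(t)|=|y_0|\,e^{-J_{T,t_0}^{\a}(p)(t)}\le |y_0|\,e^{M}$, so for any $\e>0$ the choice $\d=\d(t_0,\e)=e^{-M}\e$ makes $|y_0|<\d$ imply $|y(t)|<\e$ for every $t\ge t_0$, which is precisely Lyapunov stability of the trivial solution.

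For part $(2)$, I would first note that $\lim_{t\to\infty}J_{T,t_0}^{\a}(p)(t)=\infty$ implies in particular $\liminf_{t\to\infty}J_{T,t_0}^{\a}(p)(t)>-\infty$, so part $(1)$ already yields stability; it then remains to verify convergence to the equilibrium. But $\lim_{t\to\infty}J_{T,t_0}^{\a}(p)(t)=\infty$ gives $e^{-J_{T,t_0}^{\a}(p)(t)}\to 0$, hence $|y(t)|=|y_0|\,e^{-J_{T,t_0}^{\a}(p)(t)}\to 0$ as $t\to\infty$ for every $y_0$; combined with stability this is asymptotic stability.

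There is no serious obstacle here: the only subtlety is arranging the lower bound in part $(1)$ to hold on all of $[t_0,\infty)$ rather than merely near infinity, handled by the continuity/compactness remark, and keeping track that $\d$ is permitted to depend on $t_0$, so the conclusion is stability (not uniform stability), consistent with the statement. A secondary point worth making explicit is that the solution formula \eqref{eq:solgen} genuinely exhausts the solution set of \eqref{b:2}, so that the closed form $y(t)=y_0\,e^{-J_{T,t_0}^{\a}(p)(t)}$ is the unique solution of the initial value problem.
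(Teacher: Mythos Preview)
Your proposal is correct and follows essentially the same route as the paper: derive the explicit solution $y(t)=y_0\,e^{-J_{T,t_0}^{\a}(p)(t)}$ of the initial value problem (the paper invokes ``properties of the integral operator'' to obtain this formula) and then read off the stability conclusions directly from it. Your write-up is in fact more complete than the paper's brief discussion---in particular your continuity/compactness argument turning the $\liminf$ hypothesis into a global lower bound on $[t_0,\infty)$, and your observation that $(2)$ subsumes the stability part of $(1)$, are details the paper omits.
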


Now consider the following initial value problem
\begin{equation}
\label{b:3}
{ G }_{ T }^{ \alpha  }y+p(x)y=0,  y(x_{0})=y_{0},
\end{equation}
and show that the trivial solution $y(t)\equiv 0$ of (\ref{b:3}) is stable if and only if
\begin{equation}
\label{Jlim}
J_{ T,0 }^{ \alpha  }(p)(+\infty )=+\infty,
\end{equation}
then, by means of the properties of the integral operator, we can see that the general solution of (\ref{b:3}) is
\begin{equation}
y(x)={ y }_{ 0 }{ e }^{ -{ J }_{ { T,x }_{ 0 } }^{ \alpha  }p (x) },
\end{equation}
from which the statement \ref{Jlim} is easily obtained.\\

We are now in a position to establish the main results on the stability of equations of type (\ref{e:1}).
\begin{theorem}
\label{t:1}
Consider the differential equation

\begin{equation}
\label{b:4}
{ G }_{ T }^{ \alpha  }y+p(x)y=g(x,y), \quad x>0,
\end{equation}
where $p(x)$ is a positive and continuous function such that $0<p\le p(x)$, and $g$ is a continuous function with $g(x,0)\equiv 0$ for all $x$. If

\begin{equation}
\label{limG}
\lim_{ z\to 0} \sup_{x>0} \frac { \left| g(x,z) \right|  }{ z } =0,
\end{equation}
then the trivial solution $y(x)\equiv 0$ is an asymptotically stable solution.
\end{theorem}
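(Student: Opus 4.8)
The plan is to regard \eqref{b:4} as a small perturbation of the linear equation \eqref{b:2} and to exploit the damping coming from $p(x)\ge p>0$. The one point that genuinely matters is that one must use the variation-of-parameters formula \eqref{eq:solgen}, not the bare integral form coming from Proposition \ref{p:equivalence}: in the latter the term $p(x)y$ appears under $J_{T,x_0}^\a$ with a \emph{positive} sign and a naive Gronwall argument would blow up, whereas \eqref{eq:solgen} puts the damping where it helps. Throughout, write $v(x)=J_{T,x_0}^\a(1)(x)=\int_{x_0}^x \frac{d\o}{T(\o,\a)}$, which is nonnegative, nondecreasing, with $v(x_0)=0$; by Theorem \ref{t:pos}$(2)$ and $p\le p(x)$ one has $J_{T,x_0}^\a(p)(x)\ge p\,v(x)$.

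First I would use \eqref{limG} to convert the smallness of $g$ into a pointwise bound: fix $\eta$ with $0<\eta<p$; then there is $\rho>0$ such that $|g(x,z)|\le\eta|z|$ for all $x>0$ and all $|z|\le\rho$ (the case $z=0$ being $g(x,0)=0$). Let $|y_0|<\rho$ and let $y$ be the (globally existing, by the standing hypotheses) solution of \eqref{b:4}--\eqref{i:1}. On any interval $[x_0,x]$ where $|y|\le\rho$, the function $q(s):=g(s,y(s))$ is continuous, so \eqref{eq:solgen} with $a=x_0$ and $C=y_0$ gives
$$
y(x)\,e^{J_{T,x_0}^\a(p)(x)}=y_0+J_{T,x_0}^\a\big(g(\cdot,y)\,e^{J_{T,x_0}^\a(p)}\big)(x).
$$
Put $u(x)=|y(x)|\,e^{J_{T,x_0}^\a(p)(x)}$. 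By Theorem \ref{t:pos} and $|g(s,y(s))|\le\eta|y(s)|$ this yields $u(x)\le|y_0|+\eta\,J_{T,x_0}^\a(u)(x)$.

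Next I would apply Gronwall's inequality (Theorem \ref{t:gronwall}) with $r=u$, $c\equiv|y_0|$, $d\equiv1$, $k\equiv\eta$. Because $k$ is constant, the identity computed in \eqref{eq:boundedness11bis} (via Theorem \ref{t:chain}, Proposition \ref{p:inverse} and Proposition \ref{p:fundamental}) gives $J_{T,x_0}^\a\big(\eta\,e^{-J_{T,x_0}^\a(\eta)}\big)(x)\,e^{J_{T,x_0}^\a(\eta)(x)}=e^{\eta v(x)}-1$, hence $u(x)\le|y_0|\,e^{\eta v(x)}$. Dividing by $e^{J_{T,x_0}^\a(p)(x)}$ and using $J_{T,x_0}^\a(p)(x)\ge p\,v(x)$,
$$
|y(x)|\le|y_0|\,e^{\eta v(x)}e^{-J_{T,x_0}^\a(p)(x)}\le|y_0|\,e^{-(p-\eta)v(x)}\le|y_0|<\rho .
$$
A routine continuation argument removes the a priori restriction: if $x^{*}=\sup\{x\ge x_0:\ |y|\le\rho\ \text{on}\ [x_0,x]\}$ were finite, continuity would force $|y(x^{*})|=\rho$, contradicting the strict bound just obtained; thus $|y(x)|\le|y_0|\,e^{-(p-\eta)v(x)}$ for all $x\ge x_0$. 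Given $\e>0$, choosing $\d=\min\{\e,\rho\}$ makes $|y_0|<\d$ imply $|y(x)|\le|y_0|<\e$ for every $x\ge x_0$, so the trivial solution is (uniformly) stable; and since $v(x)\to\infty$ as $x\to\infty$, the right-hand side tends to $0$, giving asymptotic stability.

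The hard part is not the estimates, which follow the template of Theorem \ref{t:boundedness1}, but two bookkeeping issues. The first is the continuation/bootstrap step above, together with verifying that $u$ is continuous on each compact subinterval so that Theorem \ref{t:gronwall} applies --- both immediate once global existence of $y$ is granted. The second, more serious, is that the conclusion $|y(x)|\to0$ uses $v(x)\to\infty$, i.e.\ $\int_{x_0}^{\infty}\frac{d\o}{T(\o,\a)}=\infty$; this is not written in the hypotheses but is the natural standing assumption in this setting (it is precisely condition \eqref{Jlim} applied to a constant lower bound of $p$, and it holds for every kernel listed in the introduction). Without it the argument still delivers uniform stability but not the asymptotic part.
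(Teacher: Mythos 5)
Your proof is correct, but it follows a genuinely different route from the paper's. The paper argues scalar-style: it notes that (by uniqueness) a solution starting in $(0,\delta)$ cannot cross the trivial solution, so it keeps a fixed sign; then the sign of the right-hand side $-p(x)y+g(x,y)\le -(p-\varepsilon)y<0$ shows $y$ is decreasing, and a comparison with the linear equation \eqref{b:3} yields $0<y(x)\le y_0\,e^{-J_{T,x_0}^\alpha(p-\varepsilon)(x)}\to 0$. You instead pass to the integrating-factor integral identity from \eqref{eq:solgen}, apply the fractional Gronwall inequality (Theorem \ref{t:gronwall}) to $u=|y|e^{J(p)}$, and close the loop with a continuation argument. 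Your version buys two things: it does not need the "no two solutions can intersect" step, which tacitly requires uniqueness (the standing hypotheses only make $g$ continuous, so the paper's sign-preservation claim is actually the weakest link in its own proof), and it works verbatim for systems, where monotonicity arguments are unavailable; the paper's argument is shorter but strictly scalar. Both proofs share the same unstated hypothesis, which you correctly isolate: the decay $e^{-(p-\eta)v(x)}\to 0$ (equivalently $y_0e^{-J_{T,x_0}^\alpha(p-\varepsilon)(x)}\to 0$ in the paper) requires $\int_{x_0}^{\infty}T(\omega,\alpha)^{-1}\,d\omega=\infty$, i.e.\ condition \eqref{Jlim} for constants; without it both arguments give only (uniform) stability. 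Your observation that the naive integral form of Proposition \ref{p:equivalence} would put $p(x)y$ on the wrong side of Gronwall is also apt --- it explains why Theorem \ref{t:boundedness1} needs integrability of $k$ while here the damping must be extracted first.
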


\begin{proof}
Choose any $\varepsilon \in (0,p)$. By (\ref{limG}), there exists $\delta>0$ such that

\begin{equation*}
\sup_{x>0} \left| g(x,z) \right|
\le \varepsilon \left| z \right| ,\quad for\quad all\quad \left| z \right| \le \delta.
\end{equation*}

Let ${ y }_{ 0 }\in (0,\delta )$. Since no two solutions can intersect, we have $y(x) > 0$ (or $y(x) < 0$) for all $x>0$. Consider first that $y(x) > 0$,  taking $y({ x }_{ 0 })={ y }_{ 0 }$ we obtain from (\ref{b:4}) that $-p(x)y+g(x,y)\le -py+\varepsilon y<0$, i.e., the solution $y(x)$ is decreasing. Therefore, we have ${ G }_{ T }^{ \alpha  }y(t)\le -(p-\varepsilon )y$ but this is a particular case of (\ref{b:3}), so we have $0<y(x)\le { y }_{ 0 }{ e }^{ -{ J }_{ { T,x }_{ 0 } }^{ \alpha  }(p-\varepsilon )(x) }\rightarrow 0,\quad as\quad x\rightarrow +\infty$.
Similarly, we can prove the $y(x)\rightarrow 0$ as $x\rightarrow +\infty$ if ${ y }_{ 0 }\in (-\delta,0 )$. Hence the trivial solution $y(x)\equiv 0$ is asymptotically stable.
\end{proof}

The previous theorem, allows us to establish conditions to ensure the asymptotic stability of the Bernoulli Equation (\ref{b:1}). In this way we have the following

\begin{corollary}
Under assumption of Theorem \ref{t:1} on function $p(x)$, be $q(x)$ a bounded function, then the trivial solution $y(x)\equiv 0$ of (\ref{b:1}) is asymptotically stable.
\end{corollary}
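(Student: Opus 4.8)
The plan is to recognize equation (\ref{b:1}) as a particular instance of (\ref{b:4}) and then simply invoke Theorem \ref{t:1}. Concretely, I would set $g(x,y):=q(x)\,y^{n}$, so that (\ref{b:1}) becomes ${G}_{T}^{\alpha}y+p(x)y=g(x,y)$, which is exactly (\ref{b:4}). By hypothesis $p$ already satisfies the conditions required in Theorem \ref{t:1} (positive, continuous, $0<p\le p(x)$), so the only work is to check that this particular $g$ fulfils the two requirements of that theorem: that $g$ is continuous with $g(x,0)\equiv 0$, and that it satisfies the smallness condition (\ref{limG}).

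The first requirement is immediate: $q$ is continuous and, for the admissible exponents $n>1$, the map $(x,y)\mapsto q(x)y^{n}$ is continuous on $\mathbb{R}_{+}\times\mathbb{R}$ with $g(x,0)=q(x)\cdot 0^{n}=0$ for all $x$. For the second, let $M:=\sup_{x>0}|q(x)|<\infty$, which exists precisely because $q$ is bounded. Then for every $z\neq 0$,
\[
\sup_{x>0}\frac{\left|g(x,z)\right|}{\left|z\right|}=\sup_{x>0}|q(x)|\,|z|^{n-1}\le M\,|z|^{n-1},
\]
and since $n>1$ we have $|z|^{n-1}\to 0$ as $z\to 0$, which gives $\lim_{z\to 0}\sup_{x>0}|g(x,z)|/|z|=0$, i.e.\ (\ref{limG}). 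With both hypotheses verified, Theorem \ref{t:1} applies verbatim and yields that the trivial solution $y(x)\equiv 0$ of (\ref{b:1}) is asymptotically stable.

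I expect the only genuine subtlety — rather than a technical obstacle — to be the range of $n$: the one-line estimate above works cleanly for $n>1$, whereas for $0<n<1$ one has $|z|^{n-1}\to\infty$ as $z\to 0$ (so (\ref{limG}) fails) and for $n<0$ the value $g(x,0)$ is not even defined, so Theorem \ref{t:1} does not apply directly in those regimes. Hence the proof amounts to reading (\ref{b:1}) with $n>1$, after which everything reduces to substituting $g(x,y)=q(x)y^{n}$ and using the boundedness of $q$; no new estimates beyond those already established are needed.
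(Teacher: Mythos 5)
Your proposal is correct and is essentially the argument the paper intends: the paper gives no explicit proof of this corollary, merely asserting that it follows from Theorem \ref{t:1}, and your verification (take $g(x,y)=q(x)y^{n}$, use boundedness of $q$ to get $\sup_{x>0}|g(x,z)|/|z|\le M|z|^{n-1}\to 0$) is exactly the computation being suppressed. Your closing remark is a genuine refinement rather than a digression: since (\ref{b:1}) only assumes $n\neq 0,1$, the corollary as stated is too broad, and your observation that (\ref{limG}) holds only for $n>1$ (failing for $0<n<1$, with $g(x,0)$ undefined for $n<0$) identifies a restriction the paper should have made explicit.
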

The following result shows how the integral operator $J_{T,a}^\a$ can be applied in order to solve fractional differential equations.
\begin{theorem}
\label{t:linearfr}
Let $p$ and $q$ be continuous functions on an interval $I \subseteq \RR$, $a \in I$, $\a \in (0,1]$ and $n \in \RR \setminus \{1\}$, and consider
the Bernoulli fractional equation
\begin{equation}
\label{eq:linearfr1}
G_{ T }^{ \alpha  }y+p(t)y=q(t)y^n .
\end{equation}
Then the following statements hold.

$(1)$ For each $C \in \RR$, the function
\begin{equation}
\label{eq:linearfr2}
\begin{aligned}
y(t)= & [{ e }^{ -(1-n)J_{T,a}^\a (p)(t) }( (1-n)J_{T,a}^\a \big( q { e }^{ (1-n)J_{T,a}^\a(p) }\big)(t)\\
&+C ) ]^{1/(1-n)}
\end{aligned}
\end{equation}
is a solution of the Bernoulli fractional equation \eqref{eq:linearfr1}.

$(2)$ For each $t_0 \in I$ and $y_0 \in \RR$, let $y(t;t_0,y_0)$ be the function in \eqref{eq:linearfr2} with
$$
C=
y_0^{1-n}{ e }^{ (1-n)J_{T,a}^\a (p)(t_0) } - (1-n)J_{T,a}^\a \big( q { e }^{ (1-n)J_{T,a}^\a(p) }\big)(t_0).
$$
If $(y_0^{1-n})^{1/(1-n)} = y_0$ and $y(t;t_0,y_0)$ is defined in some right or left neighborhood $U \subseteq I$ of $t_0$,
then $y(t;t_0,y_0)$ is a solution of \eqref{eq:linearfr1} on $U$ satisfying the initial condition $y(t_0;t_0,y_0)=y_0$.

$(3)$ For each $t_0 \in I$ and $y_0 > 0$ there exists a unique solution of \eqref{eq:linearfr1}
satisfying the initial value $y(t_0)=y_0$, given by $y(t;t_0,y_0)$.

$(4)$ If $n\in(1,\infty) \cap \QQ$ and $1/(1-n)=r/s$ with $s$ an odd integer,
then for each $t_0 \in I$ and $y_0 \in \RR$ there exists a unique solution of \eqref{eq:linearfr1}
satisfying the initial value $y(t_0)=y_0$, given by $y(t;t_0,y_0)$.
\end{theorem}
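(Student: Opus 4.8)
The plan is to reduce the Bernoulli equation \eqref{eq:linearfr1} to a linear equation by the classical substitution $u = y^{1-n}$, and then to invoke the solution formula \eqref{eq:solgen} for the linear case together with the Picard-type existence--uniqueness result Lemma~\ref{t:Picard}.

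For part $(1)$ I would put $u = y^{1-n}$. By the chain rule (Theorem~\ref{t:chain}) one has $G_T^\a u = (1-n)y^{-n}G_T^\a y$; substituting $G_T^\a y = q y^n - p y$ from \eqref{eq:linearfr1} turns this into the linear equation $G_T^\a u + (1-n)p(t)u = (1-n)q(t)$, whose solution, by \eqref{eq:solgen} applied with coefficient functions $(1-n)p$ and $(1-n)q$, is precisely the bracketed expression in \eqref{eq:linearfr2}, so that $y = u^{1/(1-n)}$ is the claimed formula. Conversely, to verify that this $y$ genuinely solves \eqref{eq:linearfr1}, let $u$ denote the bracket; then $u$ satisfies the linear equation by \eqref{eq:solgen}, Theorem~\ref{t:chain} with outer function $s\mapsto s^{1/(1-n)}$ gives $G_T^\a y = \frac{1}{1-n}\,u^{1/(1-n)-1}G_T^\a u$, and substituting $G_T^\a u = (1-n)(q - pu)$ together with the identity $u = y^{1-n}$ collapses the right-hand side to $q y^n - p y$.

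For part $(2)$ I would evaluate the bracket of \eqref{eq:linearfr2} at $t = t_0$ and solve the resulting scalar relation for $C$; the value stated is exactly the one that forces $u(t_0) = y_0^{1-n}$, whence $y(t_0;t_0,y_0) = (y_0^{1-n})^{1/(1-n)} = y_0$ by hypothesis, and part $(1)$ restricted to $U$ shows it solves \eqref{eq:linearfr1} on $U$. For part $(3)$, when $y_0 > 0$ we have $u(t_0) = y_0^{1-n} > 0$, hence $u > 0$ and $y = u^{1/(1-n)} > 0$ on a neighborhood of $t_0$ where all the power functions involved are smooth, giving existence via part $(2)$. For uniqueness I would write \eqref{eq:linearfr1} as $G_T^\a y = F(t,y)$ with $F(t,y) = q(t)y^n - p(t)y$, note that $F$ is continuous and of class $C^1$ in $y$ on the open set $I\times(0,\infty)$, hence $(C,\mathrm{Lip})$ there, observe that any solution through $(t_0,y_0)$ with $y_0 > 0$ remains positive near $t_0$, and apply Lemma~\ref{t:Picard} to force it to coincide with $y(t;t_0,y_0)$.

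For part $(4)$, the role of the hypothesis $1/(1-n) = r/s$ with $s$ odd is that $w\mapsto w^{1/(1-n)} = (w^{1/s})^r$ (and likewise $y\mapsto y^{1-n}$) is then defined for every real argument, so that \eqref{eq:linearfr2} and the value of $C$ make sense for all $y_0\in\RR$; moreover, since $n > 1$ the map $y\mapsto y^n$ and its derivative $n y^{n-1}$ extend continuously across $y = 0$ (because $n - 1 > 0$), so $F(t,y) = q(t)y^n - p(t)y$ is $(C,\mathrm{Lip})$ on all of $I\times\RR$. Existence then comes from part $(2)$ and uniqueness from Lemma~\ref{t:Picard} exactly as in part $(3)$, now without the positivity restriction. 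I expect the main obstacle to be the careful bookkeeping of branches of the power functions — making precise the sense in which $y^{1/(1-n)}$, $y^{1-n}$ and $y^n$ are real-valued and differentiable in parts $(3)$ and $(4)$ — and checking the Lipschitz property of $F$ at $y = 0$ in part $(4)$; once the substitution and the earlier linear-case formula are available, the differential-equation content is routine.
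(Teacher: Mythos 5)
Your proposal is correct and follows essentially the same route as the paper: reduce to the linear equation via $u=y^{1-n}$ (the paper verifies this by directly differentiating the bracketed expression $z$, which is the same computation as your appeal to \eqref{eq:solgen}), fix $C$ by evaluation at $t_0$ for part $(2)$, and obtain uniqueness in parts $(3)$ and $(4)$ from positivity (resp.\ the odd-denominator condition and $n>1$) together with the Picard-type result. The only cosmetic difference is that the paper invokes Picard after converting \eqref{eq:linearfr1} to an ordinary ODE via Theorem \ref{t:comp}, while you cite Lemma \ref{t:Picard} directly; both arguments are equivalent.
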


\begin{proof}
For each $C \in \RR$, let us define the function
\begin{equation}
\label{eq:linearfr3}
\begin{aligned}
z(t)= & { e }^{ -(1-n)J_{T,a}^\a (p)(t) }( (1-n)J_{T,a}^\a \big( q { e }^{ (1-n)J_{T,a}^\a(p) }\big)(t)\\
& +C ).
\end{aligned}
\end{equation}
Therefore, $y=z^{1/(1-n)}$.
Theorems \ref{t:prop} and \ref{t:chain} and Proposition \ref{p:inverse} give
$$
\begin{aligned}
G_{ T }^{\alpha}z
& = (1-n) G_{ T }^{\alpha}\big(-J_{T,a}^\a (p)\big) (t) \, e^{ -(1-n)J_{T,a}^\a (p)(t)}\\
& \times \left( (1-n) J_{T,a}^\a \big( q { e }^{ (1-n)J_{T,a}^\a(p) }\big)(t) +C \right)
\\
&\quad + (1-n) e^{ -(1-n)J_{T,a}^\a (p)(t) }G_{ T }^{\alpha}\left( J_{T,a}^\a \big( q { e }^{ (1-n)J_{T,a}^\a(p) }\big) \right)(t)
\\
& = -(1-n) p (t) \, e^{ -(1-n)J_{T,a}^\a (p)(t) }\\
& \times \left( (1-n) J_{T,a}^\a \big( q { e }^{ (1-n)J_{T,a}^\a(p) }\big)(t) +C \right)
\\
& \quad + (1-n)e^{ -(1-n)J_{T,a}^\a (p)(t) } q(t) { e }^{ (1-n)J_{T,a}^\a(p)(t) }
\\
& = -(1-n)p(t) z + (1-n)q(t) .
\end{aligned}
$$
Hence, Theorem \ref{t:chain} gives
$$
\begin{aligned}
G_{ T }^{\alpha}y
& = \frac1{1-n} \, z^{n/(1-n)} G_{ T }^{\alpha}z
\\
& = \frac1{1-n} \, z^{n/(1-n)} \Big( -(1-n)p(t) z + (1-n)q(t) \Big)
\\
& = y^{n} \big( -p(t) y^{1-n} + q(t) \big)
\\
& = -p(t)y+q(t)y^n ,
\end{aligned}
$$
and $y$ is a solution of the Bernoulli fractional equation
\eqref{eq:linearfr1}.

\smallskip

If $(y_0^{1-n})^{1/(1-n)} = y_0$ and $y(t;t_0,y_0)$ is defined in some right or left neighborhood $U \subseteq I$ of $t_0$,
then it is clear that $y(t;t_0,y_0)$ is a solution of \eqref{eq:linearfr1} on $U$ satisfying the initial condition $y(t_0;t_0,y_0)=y_0$.

\smallskip

By Theorem \ref{t:comp}, the fractional differential equation \eqref{eq:linearfr1} is equivalent to
$$
y'=\frac{-1}{T(t,\alpha )}\,p(t)y+\frac1{T(t,\alpha )}\,q(t)y^n =: f(t,y).
$$
Since $y_0>0$, the function $f(t,y)$ is continuous in a neighborhood $V$ of $(t_0,y_0)$ and, also, it is Lipschitz in the second variable in $V$.
Therefore, Picard's Theorem gives that
there exists a unique solution of \eqref{eq:linearfr1}
satisfying the initial value $y(t_0)=y_0$.
Since $y_0>0$, we have
$(y_0^{1-n})^{1/(1-n)} = y_0$, and the second item of this theorem gives that this solution is $y(t;t_0,y_0)$.

\smallskip

Assume now that $n\in(1,\infty) \cap \QQ$ and $1/(1-n)=r/s$ with $s$ an odd integer.
For each $y_0\in \RR$, the function $f(t,y)$ is continuous in a neighborhood $V$ of $(t_0,y_0)$;
since $n>1$, $f$ is Lipschitz in the second variable in $V$.
Hence, Picard's Theorem gives that
there exists a unique solution of \eqref{eq:linearfr1}
with $y(t_0)=y_0$.
Since $1/(1-n)=r/s$ with $s$ an odd integer, we have
$(y_0^{1-n})^{1/(1-n)} = y_0$ for every $y_0\in \RR$, and the second item of this theorem gives that this solution is $y(t;t_0,y_0)$.
\end{proof}

\begin{remark}
The results obtained, relative to the Bernoulli Equation, generalize and complete those obtained in \cite{MHHO}, obtained using the conformable derivative of \cite{Khalil}.
\end{remark}

The generalized derivative can be considered as a good tool to solve certain types of problems, therefore, by having the Bernoulli differential equation under this approach, we have a greater possibility to study and solve certain type of problems, as we will see in the next examples.\\

Consider the functions $p(t)=q(t)=1$, the interval $I=[0.5, 2]$ and $n=2$, according with Equation \eqref{eq:linearfr1}  we obtain,
\begin{equation}
\label{eq1}
G_{T}^{\alpha}y+y=y^{2},
\end{equation}
and consider the function $T(t, \alpha)=t^{1- \alpha}$ and $C=-1$, then by Theorem \eqref{t:linearfr} we obtain the following solution,
\begin{equation}
\label{y1}
y(t)=\left( 1 -2e^{\frac{t^{\alpha}- (0.5)^{\alpha}}{\alpha}} \right)^{-1},
\end{equation}
the solutions associated for differents values of $\alpha$ are shown in Figure \eqref{fg5}.

\begin{figure}[ht]
\includegraphics[width=8cm, height=7cm]
{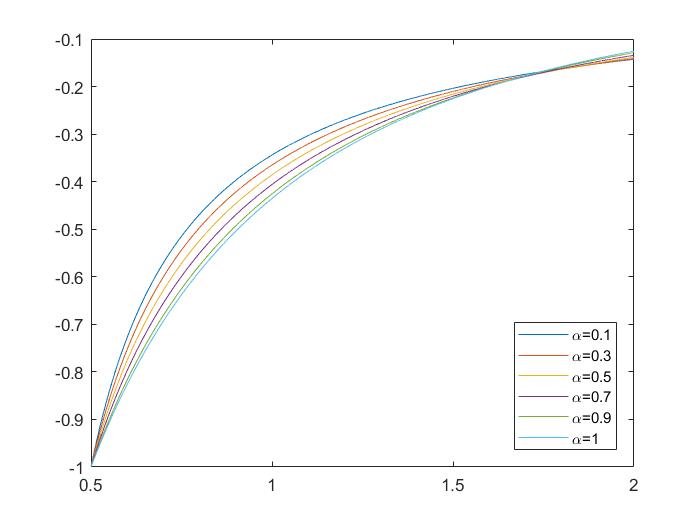}
\caption{Solution of Equation \ref{eq1} with $T(t, \alpha)=t^{1- \alpha}$ for $\alpha=0.1, 0.3, 0.5, 0.7, 0.9, 1$ .}
\label{fg5}
\end{figure}

As we can see, the solutions approach the curve associated with the value of $\alpha=1$, which corresponds to the case of the Bernoulli equation with derivative of order one, this is because the kernel $T(t, \alpha)=t^{1-\alpha}$ is associated with a conformable fractional derivative, however this does not happen in all cases, as we will show in the next example.\\

Now if we choose the functions $p(t)=q(t)=e^{t}$, the interval $I=[0.5, 6]$ and $n=2$, replacing this values in Equation (\ref{eq:linearfr1})  we obtain
\begin{equation}
\label{eq2}
G_{T}^{\alpha}y+e^{t}y=e^{t}y^{2},
\end{equation}
then, if we choose the kernel $T(t, \alpha)=e^{t-\alpha}$ and $C=1$, by Theorem \eqref{t:linearfr} we obtain,
\begin{equation}
\label{y2}
y(t)=\frac{1}{1-3e^{(t-0.5)e^{\alpha}}},
\end{equation}
in Figure \eqref{fg6} the solutions associated are shown for different values of $\alpha$.

\begin{figure}[H]
\includegraphics[width=8cm, height=7cm]
{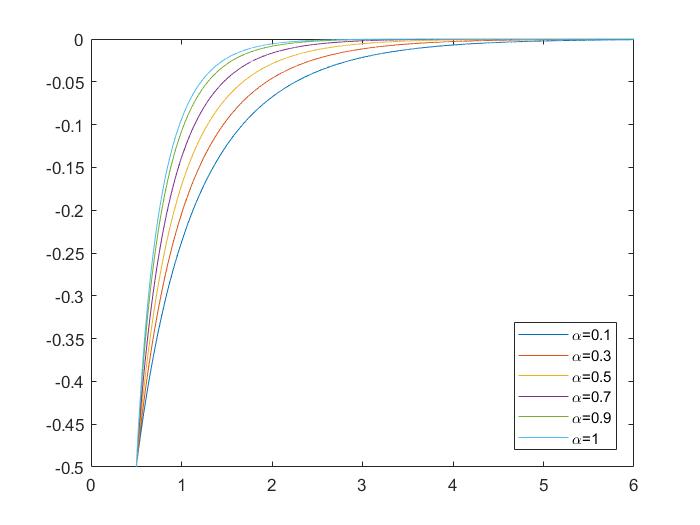}
\caption{Solution of Equation \eqref{eq2} with $T(t, \alpha)=e^{t- \alpha}$ for $\alpha=0.1, 0.3, 0.5, 0.7, 0.9, 1$ .}
\label{fg6}
\end{figure}

In this second example we can see that the solutions approximate in ascending order the solution associated with the case $\alpha=1$, in this case the kernel $T(t, \alpha)=e^{t-\alpha}$ corresponds to a non-conformable fractional derivative, however, we could obtain similar results to the first example, if we choose a suitable function $T(t, \alpha)$ in such a way that this function corresponds to a conformable derivative, as we can see, one of the advantages of considering the generalized derivative is precisely the freedom to choose the kernel in such a way that it is appropriate for the study and  solution of the problem under consideration, in addition to the fact that it generalizes the definitions of fractional derivative mentioned at the beginning of this work.

\subsection{A Finite Difference Method for the Generalized Bernoulli Differential Equation}

The equation,
\begin{equation}
\label{eq:FB}
\left\{\begin{array}{l}
G_{ T }^{ \alpha  }y+p(t)y=q(t)y^n, \, \, a\leq t \leq b,\\
y(a)=\beta,
\end{array}\right.
\end{equation}
according to Theorem \ref{t:comp} is equivalent to
$$
y'+\frac{1}{T(t,\alpha )}\,p(t)y=\frac1{T(t,\alpha )}\,q(t)y^n.
$$
Since we do not have any information at the right end of the interval, we cannot proceed as in the usual finite differences, where usually a tridiagonal matrix is constructed.

So, the strategy will be to transform the process to an iterative method based on the idea of the fixed point theory, then use the two-point finite difference formula to find the first value on the right using the initial condition as the initial value, then the three-point difference formula to generate the next three values and achieve a better order of convergence, and then finally move to the five-point difference formula with an order of convergence $\mathcal{O}(h^4)$ to finish the iteration.

It is worth mentioning that the method can be easily extended to other types of non-linearities, can also be divided into three independent methods that work with different degrees of convergence. By switching from forward to backward difference, you can decide whether you want to solve a nonlinear equation or not, although you have to choose between speed and convergence.

First, we select an integer $N>0$ and divide the interval $[a, b]$ into $(N+1)$ equal subintervals whose endpoints are the mesh points $x_i=a+i h$, for $i=0,1, \ldots, N+1$, where $h=(b-a) /(N+1)$.

Since the only information we do have is the $y_{0}=y(a)=\beta$ value given by the initial condition, we will start by using the two point forward difference formula to determine $y_{1}$, with truncation error $\mathcal{O}(h)$
$$
y^{\prime}\left(t_i\right)=\frac{y\left(t_i+h\right)-y\left(t_i\right)}{h}-\frac{h}{2} y^{\prime \prime}(\xi_0), \, \, (t_i<\xi_0<t_i+h) .
$$
By making $y\left(t_{i+1}\right)=y\left(t_i+h\right)$ the equation \eqref{eq:linearfr1} can be written as
$$
\frac{1}{ h}\left[y\left(t_{i+1}\right)-y\left(t_{i}\right)\right]+\frac{p(t_{i})}{T(t_{i},\alpha )}\,y(t_{i})=\frac{q(t_{i})}{T(t_{i},\alpha )}\,y(t_{i})^n,
$$
or,
\begin{equation} \label{O1}
y\left(t_{i+1}\right)=\frac{-hp(t_{i})}{T(t_{i},\alpha )}\,y(t_{i})+\frac{hq(t_{i})}{T(t_{i},\alpha )}\,y(t_{i})^n+y\left(t_{i}\right).
\end{equation}
Starting at $i=0$, the equation \eqref{O1} defines an iterative process that converges to the solution, this is the well known Euler method.

We can also use the finite difference backward formula (replacing $h$ by $-h$), then the iterative process is defined as follows

\begin{equation} \label{O2}
y\left(t_{i}\right)-y\left(t_{i-1}\right)+\frac{hp(t_{i})}{T(t_{i},\alpha )}\,y(t_{i})-\frac{hq(t_{i})}{T(t_{i},\alpha )}\,y(t_{i})^n=0.
\end{equation}
Where we must solve a nonlinear equation at each step of the iterative process to determine $y\left(t_{i}\right)$, the $fsolve$ command in Matlab R2022B can be used for this purpose, taking as an initial approximation the initial condition to start the process, that can then be updated with the previous point.

The equation \eqref{O2} improves the convergence with respect to the equation \eqref{O1} in exchange for consuming more time. So we must decide between a better convergence or a shorter execution time.

We have already found a solution to our problem, but recall that our goal was to use one of the above equations to find $y_1$. Using the three-point midpoint difference formula, with truncation error $\mathcal{O}(h^2)$, we can find $y_2$.
$$
y^{\prime}\left(t_i\right)=\frac{1}{2 h}\left[y\left(t_i+h\right)-y\left(t_i-h\right)\right]-\frac{h^2}{6} y^{(3)}\left(\xi_1\right), \, \, (t_i-h<\xi_1<t_i+h), 
$$
and since we know $y_0$ and $y_1$ and taking $i=1$ to start, the equation $\eqref{eq:FB}$ becomes,
$$
\frac{1}{2 h}\left[y\left(t_{i+1}\right)-y\left(t_{i-1}\right)\right]+\frac{p(t_{i})}{T(t_{i},\alpha )}\,y(t_{i})=\frac{q(t_{i})}{T(t_{i},\alpha )}\,y(t_{i})^n,
$$
or,
\begin{equation} \label{O3}
y\left(t_{i+1}\right)=\frac{-2hp(t_{i})}{T(t_{i},\alpha )}\,y(t_{i})+\frac{2hq(t_{i})}{T(t_{i},\alpha )}\,y(t_{i})^n+y\left(t_{i-1}\right).
\end{equation}
With \eqref{O3} we again obtain an iterative process that converges to the solution, with better results than previously found. Now let's try with the three-point backward difference formula
$$
y^{\prime}\left(t_i\right)=\frac{1}{2 h}\left[3y\left(t_i\right)-4y\left(t_{i-1}\right)+y\left({t_{i-2}}\right)\right]-\frac{h^2}{3} y^{(3)}\left(\xi_1\right), \, \, (t_i<\xi_1<t_i+2h), 
$$
the process is determined as follows
\begin{equation} \label{O4}
3y\left(t_i\right)-4y\left(t_{i-1}\right)+y\left(t_{i-2}\right)+\frac{hp(t_{i})}{T(t_{i},\alpha )}\,y(t_{i})-\frac{hq(t_{i})}{T(t_{i},\alpha )}\,y(t_{i})^n=0.
\end{equation}
Starting at $i=2$ we must again solve for $y\left(t_i\right)$ at each iteration and the same effects as above will be obtained.

Finally, let us use the initial condition $y_0$, take $y_1$ from the first iteration of \eqref{O1} or \eqref{O2} and $y_3$ and $y_4$ from \eqref{O3} or \eqref{O4} to develop a last method from the backward five-point finite difference formula with truncation error $\mathcal{O}(h^4)$. It is not recommended to use a higher order approximation scheme due to Runge's phenomenon, instead other techniques including for example irregular grids, Chebyshev polynomials or spectral methods can be used.

The backward five-point difference formula is determined by the formula
$$
\begin{aligned}
y^{\prime}\left(t_i\right)&=\frac{1}{12 h}\left[25y\left(t_i\right)-48y\left(t_i-h\right)+36y\left(t_i-2h\right)-16y\left(t_i-3h\right)+3y\left(t_i-4h\right)\right]\\
& -\frac{h^4}{5} y^{(3)}\left(\xi_2\right) \, \, \, (t_i<\xi_2<t_i+4h),
\end{aligned}
$$
we obtain then,
\begin{equation} \label{O5}
\begin{aligned}
25y\left(t_i\right)&-48y\left(t_{i-1}\right)+36y\left(t_{i-2}\right)-16y\left(t_{i-3}\right)+3y\left(t_{i-4}\right)+\frac{12hp(t_{i})}{T(t_{i},\alpha )}\,y(t_{i})\\
&-\frac{12hq(t_{i})}{T(t_{i},\alpha )}\,y(t_{i})^n=0.
\end{aligned}
\end{equation}
Which again must be solved using some method for nonlinear equations.\\


Let's solve again equation \eqref{eq1} given in example 1 using the finite difference method (FDM), Table \ref{Cmp1} and Figure \ref{fig} show the solution and error between the finite difference method  and the exact solution of equation \eqref{eq1} for $\alpha=0.5$, we have zoomed in on the Figure \ref{fig} to show the difference between the curves.

\begin{figure}[ht]
\includegraphics[width=10cm, height=7cm]
{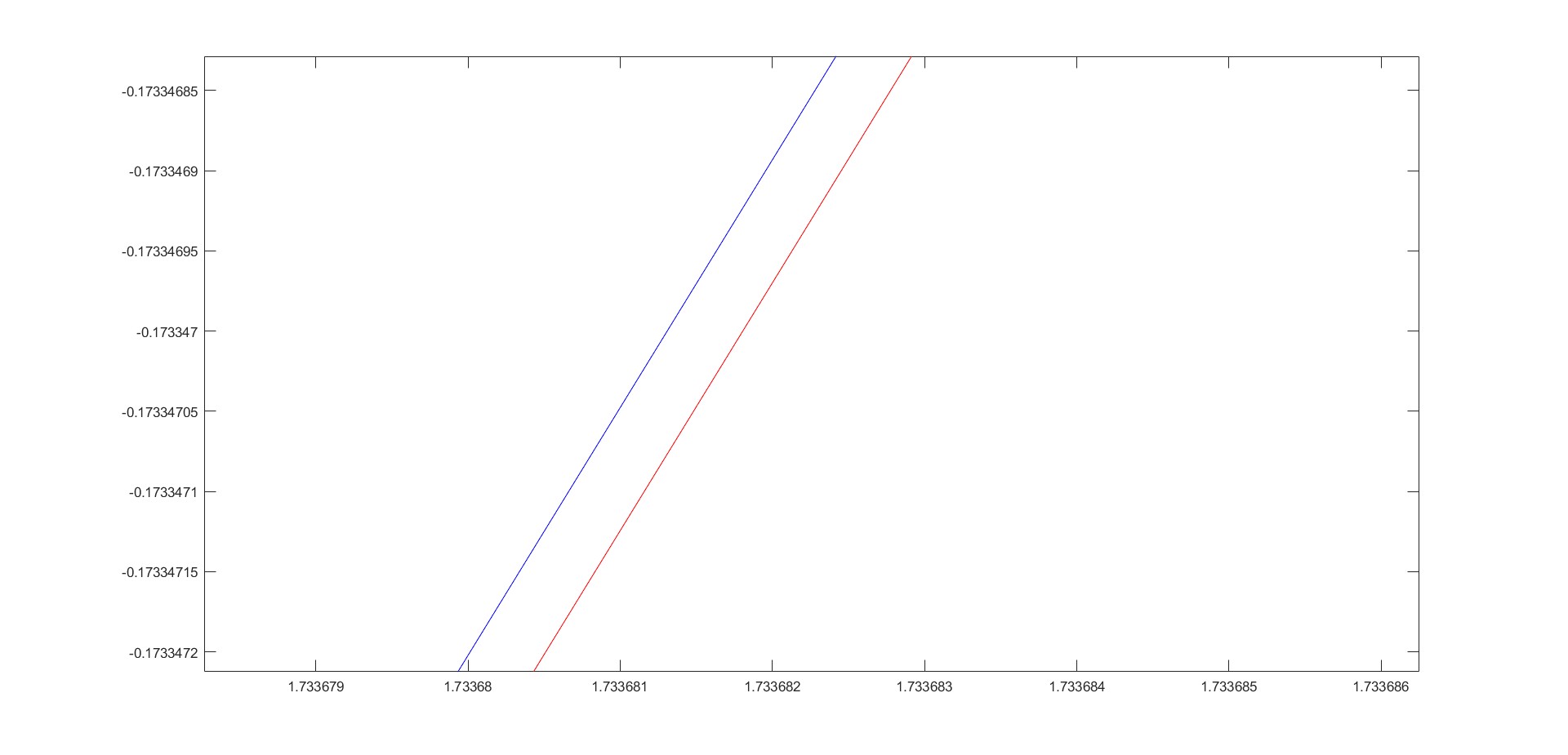}
\caption{Comparison between the FDM and the exact solution for (\ref{eq1}).}
\label{fig}
\end{figure}

\begin{table}[ht]
\begin{tabular}{|l|l|l|l|}
\hline Pts & Exact Sol & FDM & Error \\ \hline
0.5 & -1 & -1 & 0 \\ \hline
0.500299940011998 & -0.999152308016293 & -0.999152149990093 & 1.58026200125505e-07 \\ \hline
0.500599880023995 & -0.998305946878573 & -0.998305948338594 & 1.46002121503841e-09 \\ \hline
...... & ...... & ...... & ...... \\ \hline
1.99970005998800 & -0.138412976596618 & -0.138413036296161 & 5.96995426982439e-08 \\ \hline
2.00000000000000 & -0.138379560668148 & -0.138379620351763 & 5.96836149668878e-08 \\ \hline
\end{tabular}%

\caption{Comparison between the FDM and the exact solution for (\ref{eq1}) with $N=5000$ interval divisions.}
\label{Cmp1}
\end{table}

The above comparison shows that the proposed finite difference method is indeed feasible and that it delivers a solution quite close to the exact solution with considerably small error and low run time. In the following example we will show that the method can also be applied to more complicated problems, where it is sometimes difficult to obtain an exact solution.

There are cases in which the functions $p(t)$ and $q(t)$ make it difficult the use of the Theorem \eqref{t:linearfr} because complicated integrals must be solved, herein lies the usefulness of numerical methods.
If we choose the functions $p(t)=e^{-t^2}$, $q(t)=t$, $I=[-3,3]$ and $n=2$, we get the equation
\begin{equation}
\label{CE}
G_{T}^{\alpha}y+e^{-t^2}y=ty^{2}.
\end{equation}

Taking $T(t, \alpha)=e^{(1-\alpha)t}$ and $C=1$, we can find the solution to the previous equation through the FDM, and since we don't have an exact solution for comparison we may approximate the integrals in Theorem \eqref{t:linearfr} by a numerical integration method. This will allow us to validate again the proposed numerical method, as we do not have an explicit solution of the fractional equation. Table \ref{Cmp2} shows both solutions for $\alpha=0.5$, graphically, the difference between the two solution curves is hardly noticeable. Figure \ref{figBC} displays the solutions associated for differents values of $\alpha$ for the FDM.
\begin{table}[ht]
\begin{tabular}{|l|l|l|l|}
\hline Pts & Integ & FDM & abs(FDM-Integ) \\ \hline
-3 & 1 & 1 & 0 \\ \hline
-2.99880023995201 & 0.984132376988178 & 0.983900701320463 & 0.000231675667714537 \\ \hline
-2.99760047990402 & 0.968775560311577 & 0.968798608504813 & 2.30481932365079e-05 \\ \hline
...... & ...... & ...... & ...... \\ \hline
2.99880023995201 & 0.0115194694289867 & 0.0115187854523517 & 6.83976635032849e-07 \\ \hline
3.00000000000000 & 0.0115195756295232 & 0.0115188916402543 & 6.83989268873003e-07 \\ \hline
\end{tabular}
\caption{Comparison between FDM and aproximation of the integrals in the exact solution.}
\label{Cmp2}
\end{table}

\begin{figure}[ht]
\includegraphics[width=11cm, height=7cm]
{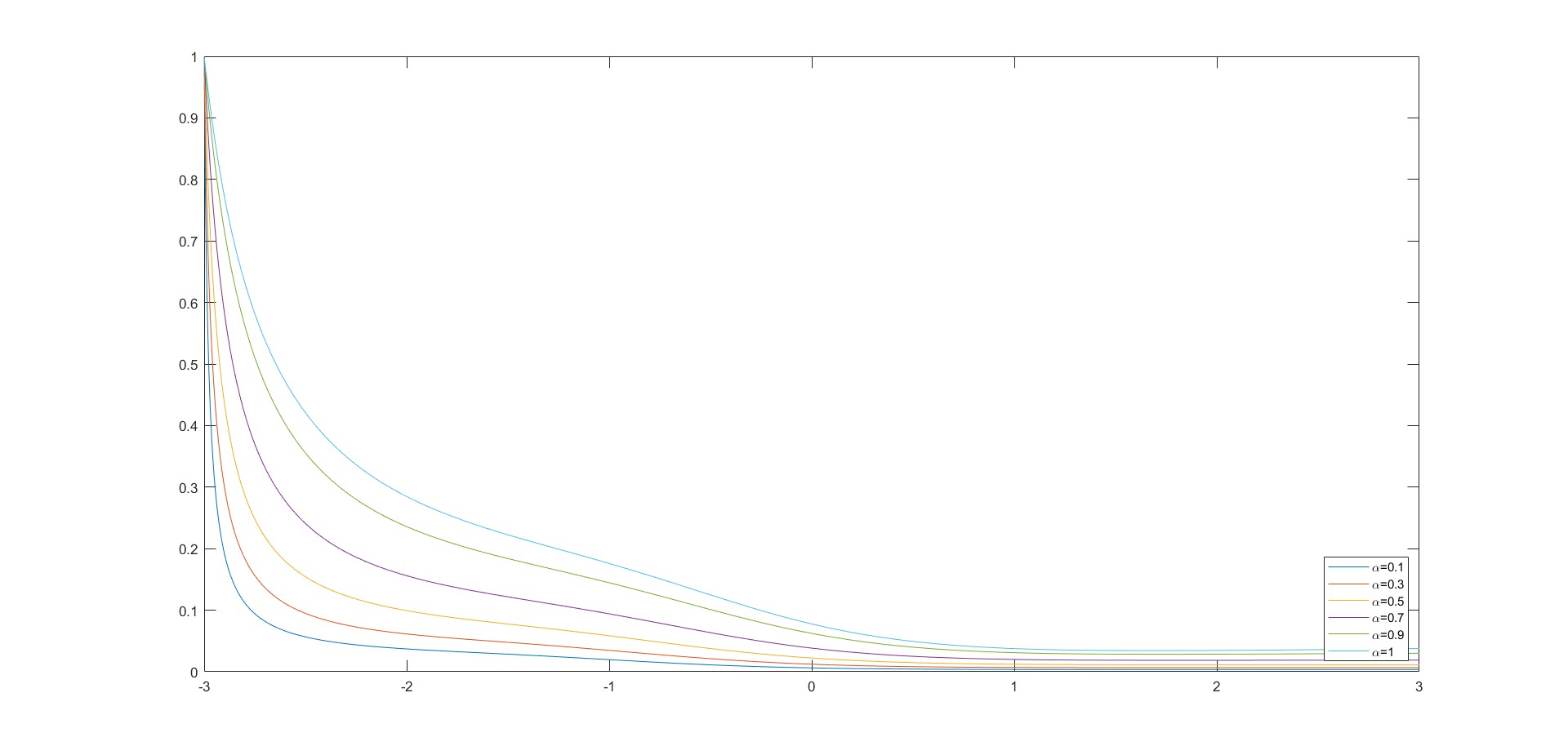}
\caption{Solution of Equation \eqref{CE} with $T(t, \alpha)=e^{(1-\alpha)t}$ for $\alpha=0.1, 0.3, 0.5, 0.7, 0.9, 1$.}
\label{figBC}
\end{figure}
In this example we can see that the solutions approximate in ascending order the case $\a = 1$ with the kernel $T(t, \alpha)=e^{(1-\alpha)t}$,  that corresponds to a conformable derivative.

We have been able to compare two different approaches, one in which the derivative is approximated by an appropriate difference quotient and another in which the integral has been approximated within the explicit solution. The approximation of the integral has been performed by numerical methods using Matlab R2022B, although it has proved to be an alternative to FDM, it has required much more time to deliver the solution, showing an increase of time as the complexity of the integrals and the number of nodes increases.

The examples shown above prove that the numerical proposed method (FDM) is a valid alternative when complicated integrals must be solved in the exact solution, which broadens the number of problems we can solve.

\section{Conclusions}
In this article we proposed and solved a generalization of the Bernoulli differential equation under the generalized derivative approach, we found solubility conditions and results about the qualitative behavior of the trivial solution. For this, a generalization of Gronwell's inequality was proved as well as its reciprocal and a particular case of this inequality. After that, we shown by means of examples how this  fractional derivative approach has some advantages over some other definitions, for example, this derivative  generalizes certain definitions of fractional derivative known in the literature and further, it allows us to choose the kernel of the derivative depending on the problem under consideration, so, we can solve different problems under different derivatives approaches by choosing a suitable kernel function $T(t, \alpha)$. We also proposed and tested the reliability of a finite difference method by means of examples, for the case in which the explicit solution involves complex integrals, comparing the solution obtained by FDM with the approximation of the integrals in the explicit solution of the fractional equation, thereby extending the number of problems we can solve.

\section*{Availability of data and material}
None

\section*{Competing interests}
The authors declare no competing  interests.

\section*{Funding}
None

\section*{Authors' contributions}
All the authors contributed equally to the work.

\section*{Acknowledgements}
None


\begin{thebibliography}{99}
\bibitem{Abdejjawad} Abdejjawad, T. On conformable fractional calculus. \emph{J. Comput. Appl. Math.} {\bf 279}, (2015), 57.

\bibitem{Almeida} Almeida R., Guzowska M., Odzijewicz T.  A remark on local fractional calculus and ordinary derivatives. \emph{Open Math.}, {\bf 14}, (2016),  1122. DOI: 10.1515/math-2016-0104.

\bibitem{Atan}  Atangana A.,  Goufo EFD. Extension of Matched Asymptotic Method to Fractional Boundary Layers Problems.  \emph{Math. Probl. in Eng.}{ \bf 2014}, (2014), 7. http://dx.doi.org/10.1155/2014/107535

\bibitem{BaleanuR03} Atangana A., Baleanu D. New Fractional Derivatives with Nonlocal and Non-Singular Kernel: Theory and Application to Heat Transfer Model. \emph{Arxiv},{ \bf 20}, (2016),
https://doi.org/10.48550/arXiv.1602.03408

\bibitem{BaleanuR01} Baleanu D., Fernandez A. On Fractional Operators and Their Classifications. \emph{Mathematics}, {\bf 7}, (2019), 830. DOI: 10.3390/math7090830.

\bibitem{JB} Bernoulli, J. Explicationes, annotationes et additiones ad ea quæ in actis superior um annorum de curva elastica, isochrona paracentrica, velaria, hinc inde memorata, partim controversa leguntur; ubidelinea
mediarum directionum, aliisque novis, Acta Eruditorum. Dec (1695), 537–553.

\bibitem{BaleanuR02} Fernandez A., Ozarslan M., Baleanu D. On fractional calculus with general analytic kernels. \emph{Appl Math and Comput.} {\bf 354}, (2019), 248. DOI: 10.1016/j.amc.2019.02.045.

\bibitem{FMNS} Fleitas A., M\'endez-Berm\'udez J. A., N\'apoles Vald\'es J. E., Sigarreta J. M., On fractional Li\'enard-type systems. \emph{ Rev. Mex. de Fis.} { \bf 65}, (2019), 618. https://doi.org/10.31349/revmexfis.65.618

\bibitem{FGNRS} Fleitas A., G\'omez-Aguilar J. F., N\'apoles J. E., Rodr{\'\i}guez J. M., Sigarreta J. M., Analysis of the local Drude model involving the generalized fractional derivative.
\emph{Optik - International Journal for Light and Electron Optics}, {\bf 193}, (2019), https://doi.org/10.1016/j.ijleo.2019.163008

\bibitem{FNRS} Fleitas A., N\'apoles Vald\'es J. E., Rodr\'{\i}guez J. M., Sigarreta J. M., Note on  the generalized conformable derivative. \emph{Rev. de la Union Mat. Argentina}, {\bf 62}, (2021), 443. https://doi.org/10.33044/revuma.1930

\bibitem{G2017} G\'omez-Aguilar J. F., Irving-Mullineux oscillator via fractional derivatives with Mittag-Leffler kernel. \emph{ Chaos, Solitons \& Fractals}, {\bf 95}, (2017), 179. https://doi.org/10.1016/j.chaos.2016.12.025

\bibitem{G2018} G\'omez-Aguilar J. F., Analytical and Numerical solutions of a nonlinear alcoholism model via variable-order fractional differential equations. \emph{Phys. A Stat. Mech. Appl.} {\bf 494}, (2018),  52. https://doi.org/10.1016/j.physa.2017.12.007

\bibitem{GLLMN} Guzm\'an P. M., Langton G., Lugo L. M., Medina J., N\'{a}poles Vald\'es J. E., A new definition of a fractional derivative of local type. \emph{J. Math. Anal.} {\bf 9},  (2018), 88.

\bibitem{GLNV} Guzm\'an P. M., Lugo L. M., N\'{a}poles Vald\'es J. E., On a New Generalized Integral Operator and Certain Operating Properties. \emph{Axioms}, {\bf 9}, (2020), 69. doi:10.3390/axioms9020069

\bibitem{Khalil2} Hammad M. A., Khalil R., Abel's formula and wronskian for comformable fractional differential equation. \emph{Int. J. Differ. Equ.} {\bf 13}, (2014), 177.

\bibitem{Khalil3} Hammad M. A., Khalil R., Total fractional differentials with applications to exact fractional differential equations. \emph{ Int J Comput Math}, {\bf 95}, (2018), 1444. https://doi.org/10.1080/00207160.2018.1438602

\bibitem{Baleanu06} Hajipour M., Jajarmi A.,  Baleanu D., Sun H., On an accurate discretization of a variable-order fractional reaction-diffusion equation, \emph{Commun Nonlinear Sci Numer Simul}, {\bf 69}, (2019), 119. https://doi.org/10.1016/j.cnsns.2018.09.004.

\bibitem{BaleanuR06} Huang L-L, Baleanu D., Wu G-C, Zeng S-D, A new application of the fractional logistic map. \emph{Rom J of Phys.} {\bf 61}, (2016), 1172.

\bibitem{Jarad} Jarad F., Ugurlu E., Abdeljawad T., Baleanu D., On a new class of fractional operators. \emph{Adv. Differ. Equ.}, {\bf 247}, (2017), 16. https://doi.org/10.1186/s13662-017-1306-z

\bibitem{Katugampola} Katugampola U. N., A new fractional derivative with classical properties. \emph{ arXiv}. https://doi.org/10.48550/arXiv.1410.6535

\bibitem{Katugampola1} Katugampola U. N., New approach to a generalized fractional integral. \emph{Appl. Math. Comput.} {\bf 218}, (2011), 860. https://doi.org/10.1016/j.amc.2011.03.062

\bibitem{Khalil} Khalil R., Al Horani M., Yousef A., Sababheh M., A new definition of fractional derivative. \emph{J. Comput. Appl. Math.} {\bf 264}(2014) 65, https://doi.org/10.1016/j.cam.2014.01.002

\bibitem{Kilbas} Kilbas A., Srivastava H., Trujillo J., Theory and Applications of Fractional Differential Equations, in Math. Studies, North-Holland, New York, 2006.

\bibitem{BaleanuR05} Kumar D., Singh J.,  Al Qurashi  M., Baleanu D.,  Analysis of logistic equation pertaining to a new fractional derivative with non-singular kernel. \emph{Adv in Mechan  Eng.} {\bf 9}, 
(2017). DOI: 10.1177/1687814017690069.

\bibitem{MHHO} Mahmood S. S., Hamad K. J., Hamad S. A., Omar D., Bernoulli and Riccati fractional differential equations can be
solved analytically by using conformable derivatives. \emph{ Turkish Journal of Computer and Mathematics Education}, {\bf 12}, (2021).

\bibitem{MMKP} Mart\'inez F., Mart\'inez I., Kaabar M. A. K.,  Paredes S., Note on the conformable boundary value problems: Sturm's theorems and Green's function. \emph{ Rev. Mex. de Fis.}, {\bf 67}, (2021), 471. https://doi.org/10.31349/revmexfis.67.47

\bibitem{Baleanu04} Mohammadi F., Moradi L., Baleanu D., Jajarmi A., A hybrid functions numerical scheme for fractional optimal control problems: Application to nonanalytic dynamic systems. \emph{J Vibration and Control.} {\bf 24}, (2018). https://doi.org/10.1177/1077546317741769

\bibitem{Oldham} Oldham K., Spanier J., Applications of Differentiation and Integration to Arbitrary Order, \emph{ Elsevier Science}, {\bf 111}, (1974).

\bibitem{PABB} Pe\~na P., Abreu R., Bosch P., Bory-Reyes J., Dirichlet Type Problem for 2D Quaternionic Time-Harmonic Maxwell System in Fractal Domains, \emph{ Advances in Mathematical Physics},  {\bf 2020}, (2020). https://doi.org/10.1155/2020/4735357

\bibitem{Cortez} Cortez M., Árciga M., Najera C., Hernández J., On some conformable boundary value problems in the setting of a new generalized conformable fractional derivative, \emph{ Demonstr. Math.},  {\bf 56}, (2023). https://doi.org/10.1515/dema-2022-0212

\bibitem{Flei} Ricardo Abreu-Blaya, Alberto Fleitas, Juan E. Nápoles Valdés, Rosalio Reyes, José M. Rodríguez, José M. Sigarreta. On the conformable fractional logistic models. {\em Math Meth Appl Sci.}, {\bf 43}, (2020), 4156--4167.

\end{thebibliography}
\end{document}